%Changelog:
%1. Lemma, Theorem numbering independently
%2. G(t)->G(\cdot) in the first THM

%\documentclass[12pt]{amsart}
\documentclass[11pt]{amsart}

\usepackage{amssymb,amsmath,latexsym}
\usepackage{amsthm}
\usepackage{amsfonts}
\usepackage{enumerate}
\usepackage{booktabs}
\usepackage{mathrsfs}

\usepackage{cite}

\usepackage{hyperref}

% \usepackage[color]{showkeys}
% \definecolor{refkey}{gray}{.75}
% \definecolor{labelkey}{gray}{.75}

%\usepackage[absolute]{textpos}
\usepackage{graphicx,pgf}
\usepackage{float}
%\usepackage{lscape}

%%for subfigure
\usepackage{caption}
\usepackage{subcaption}

%%end for subfigure
%\usepackage{graphicx}

%\usepackage{setspace}

\usepackage{fancyhdr}

\theoremstyle{plain} %default
\newtheorem{thm}{Theorem}[section]
\newtheorem{prop}[thm]{Proposition}
\newtheorem{lemma}[thm]{Lemma}
\newtheorem{cor}[thm]{Corollary}

\theoremstyle{definition}
\newtheorem{defn}[thm]{Definition}

\theoremstyle{remark}

\newtheorem*{remark}{Remark}

\numberwithin{equation}{section}

\setlength{\hoffset}{-.5in}
\setlength{\textwidth}{6.0in}

\setlength{\headheight}{15.2pt}
\pagestyle{fancy}
\fancyhf{}

\newcommand{\mysectionname}{}
\newcommand{\newsection}[1]{\section{#1}\renewcommand{\mysectionname}{\uppercase{#1}}}

%%%%%%%%%%set of measures
\newcommand{\MT}{\mathcal{M}_{\mathbb{T}}}  %new command

%%%%%%%%%transforms

\newcommand{\et}[1]{\eta_{#1} }

\newcommand{\bmultg}{\!\begin{array}{c} {\scriptstyle\times} \\[-12pt]\cup\end{array}\!}

%%%%%%%%convolutions
  %notation for subordination

\fancyhead[LE,RO]{\thepage}

\date{April 29, 2014}

\begin{document}
\title{Local limit theorems for multiplicative free convolutions}
\author{Michael Anshelevich}
\address{Michael Anshelevich: Department of Mathematics, Texas A\&M University, College Station, TX 77843-3368, USA}
\email{manshel@math.tamu.edu}
\author{Jiun-Chau Wang}
\address{Jiun-Chau Wang: Department of Mathematics and Statistics, University of Saskatchewan, Saskatoon,
Saskatchewan S7N 5E6, Canada}
\email{jcwang@math.usask.ca}
\author{Ping Zhong}
\address{Ping Zhong: Department of Mathematics, Rawles Hall, 831 East Third Street, Indiana University, Bloomington, Indiana 47405, USA}
\email{pzhong@indiana.edu}

\subjclass[2000]{Primary: 46L54; Secondary: 60F05}

\keywords{Free multiplicative convolution; probability density function; freely infinitely
divisible law; local limit theorem; free entropy}

\begin{abstract}

This paper describes the quality of convergence to an infinitely
divisible law relative to free multiplicative convolution. We show that convergence in distribution for products
of identically distributed and infinitesimal free random variables implies superconvergence of their probability
densities to the density of the limit law. Superconvergence to the marginal law of free multiplicative Brownian motion at a specified time is also studied. In the unitary case, the superconvergence to free Brownian motion and that to the Haar measure are shown to be uniform over the entire unit circle, implying further a free entropic limit theorem and a universality result for unitary free L\'{e}vy processes. Finally, the method of proofs on the positive half-line gives rise to a new multiplicative Boolean to free Bercovici-Pata bijection.
\end{abstract}

\maketitle

\newsection{introduction}
Given two probability laws $\mu$ and $\nu$ on the unit circle $\mathbb{T}=\{ e^{i\theta}:-\pi< \theta \leq \pi\}$, recall that their multiplicative free convolution $\mu\boxtimes\nu$
is defined as the distribution of $UV$, where $U$ and $V$ are two
free unitary random variables having distributions $\mu$ and $\nu$,
respectively. If $\mu,\nu$ are supported on the positive half-line
$\mathbb{R}_{+}=\{x:0\leq x < \infty\}$, then the law $\mu\boxtimes\nu$ is the
distribution of $\sqrt{X}Y\sqrt{X}$, where $X$ and $Y$ are now
free positive random variables distributed according to $\mu$ and
$\nu$ (see \cite{BV1993}). The research of limit theorems for $\boxtimes$ has been active in
recent years; for examples, see \cite{BP2000,BB2008a,BWang2008,CG2008}. In
particular, these works address the problem of weak convergence to a given $\boxtimes$-infinitely
divisible law for measures in an infinitesimal triangular array, and the necessary and sufficient conditions for such
a convergence to take place have been found in \cite{BWang2008, CG2008}. The current paper aims to initiate
a new direction in this research by showing that convergence to a
freely infinitely divisible law also happens at the level of probability
density functions under the same conditions of weak convergence.

To be precise, let $\mu_{n}$ be a sequence of probability laws on $\mathbb{R}_{+}$ and $k_{n}\in\mathbb{N}$ such that $\lim_{n\rightarrow\infty}k_{n}=\infty$
and the free convolution
\begin{equation}\nonumber
  \nu_{n}=(\mu_{n})^{\boxtimes k_{n}}=\mu_{n}\boxtimes\mu_{n}\boxtimes\cdots\boxtimes\mu_{n} \quad (k_n \, \, \text{times})
\end{equation}
tends weakly to a non-degenerate law $\nu$ on $\mathbb{R}_{+}$. It is known that such a limit law $\nu$ must be $\boxtimes$-infinitely divisible and that any freely infinitely divisible
law has at most one atom and its non-atomic part is absolutely continuous relative to Lebesgue measure $dx$ (cf. \cite{BB2008a,BB2005}). Furthermore, the density function $d\nu/dx$ is continuous except at finitely many points on $\mathbb{R}_{+}$ and has an analytic continuation at points where it is different from zero. Our first result (Theorem \ref{thm:4.1}) shows that if $I$ is a compact interval on which the density $d\nu/dx$ is continuous and bounded away from zero, then the measures $\nu_{n}$ restricted to $I$ become absolutely continuous in finite
time and their densities extend analytically to a neighborhood of the interval $I$. In addition, these extensions will converge locally uniformly in that neighborhood to the analytic continuation of $d\nu/dx$. An analogous result for measures supported on $\mathbb{T}$ is obtained in Theorem 4.3.

Next, we study convergence to free multiplicative Brownian motions, with an emphasis on the unitary case. Introduced in \cite{BianeBM}, the free unitary Brownian motion is a free multiplicative L\'{e}vy process $\left(U_t\right)_{t \geq 0}$ of unitary random variables that solves the free stochastic differential equation $dU_t=idX_t\,U_t-(1/2)U_t\,dt$, $U_0=I$, where the process $\left(X_t\right)_{t \geq 0}$ is a free semicircular Brownian motion. As shown by Biane \cite{BianeBM}, the distribution of the process $\left(U_t\right)_{t \geq 0}$ can be regarded as the limit of that of Brownian motion with values in the group of $N\times N$ unitary matrices, as the dimension $N\rightarrow \infty$. (Note that the case of approximating free multiplicative Brownian motion on $\mathbb{R}_{+} $ by large Brownian motion on the general linear group is proved recently in \cite{CG2013,Kemp2013}.) Free unitary Brownian motion is related to Voiculescu's liberation process in his theory of free Fisher information and free entropy \cite{DVV1999}, and many of its properties have been intensively studied in the literature \cite{BianeJFA,Zhong1,Zhong2}.

Here we shall be interested in the approximation of the law of $U_t$ by the free convolution powers $\nu_{n}$ on the circle $\mathbb{T}$. A motivation for this is a central limit problem solved in \cite{Wang}, where it was shown that the $k_n$-fold classical multiplicative convolution $\mu_{n}\circledast\mu_{n}\circledast\cdots\circledast\mu_{n}$ converges weakly on $\mathbb{T}$ to a wrapped normal of variance $t$ if and only if the corresponding free convolution $\nu_n$ converges weakly to the law of $U_t$. In Theorem 4.7 we prove, in addition to the analyticity of $d\nu_{n}/d\theta$, that the convergence to the density of $U_t$ in the preceding free multiplicative central limit theorem (MCLT) is actually uniform throughout the entire space $\mathbb{T}$. Such a global convergence result implies further the convergence of free entropy in this free MCLT (see Proposition 5.1), resembling Barron's classical result on the convergence to the normal entropy in the additive CLT on $\mathbb{R}$ \cite{Barron}. Thus, although free and classical MCLTs are equivalent as weak limit theorems, our results show that the convergence in the free limit theorem is in fact much stronger than the one in the commutative case.

We also pursue these ideas in convergence to the Haar measure of the circle group (i.e., the uniform distribution $d\theta/2\pi$ on $\mathbb{T}$). This is based on the perspective that the usual CLT process leads to a universal law of maximum entropy among all laws of finite second moment on the real line, while at the same time the Haar measure has maximum free entropy zero among all laws on the circle. In search of universal laws on the circle, we prove in Theorem 4.4 that the convergence of $d\nu_n/d\theta$ to $1/2\pi$ is also uniform on $\mathbb{T}$ and use this to establish the convergence of free entropy. Moreover, since Theorem 4.4 is proved under a mild condition on the first moment of $\nu_n$, we are able to use it to show further a uniform universality result which states that every non-degenerate free unitary L\'{e}vy process flows to the Haar measure in law, in density, and in free entropy in the long run (Proposition 5.2).

The additive version of our free local limit theorems originates
from the paper \cite{BV1995sup} of Bercovici and Voiculescu, in which this type of convergence was referred as the "superconvergence." The results in \cite{BV1995sup} were extended to measures with unbounded support in \cite{Wang2010} and to the case of freely stable laws in \cite{Kargin2011}. Our proofs in this paper rely on analytic subordination of free convolution and a technique of recasting weak convergence of measures into local uniform convergence of these subordination functions, where the latter has to do with the asymptotics of certain integral transforms (see Section 3). This method works for additive free convolution $\boxplus$ as well, showing that the superconvergence phenomenon can also be found in weak convergence to a general $\boxplus$-infinitely divisible law. However, the subordination techniques presented in this paper are not enough to prove the global uniform convergence result in the additive case. To obtain a strong result of this sort, genuinely new ideas involving a detailed analysis on the boundary behavior of Cauchy transforms are needed. The additive result is currently under preparation and will be published independently.

Finally, note that the aforementioned free and classical MCLTs establish a correspondence between the free Brownian motion and the wrapped normal on $\mathbb{T}$. The additive version of such a correspondence between free and classical infinitely divisible limit laws on $\mathbb{R}$ is known as the "Bercovici-Pata Bijection" \cite{BPata1999}. Also, a multiplicative analogue of the Bercovici-Pata Bijection between free and Boolean limit laws on $\mathbb{T}$ has been studied thoroughly in \cite{Wang}. Having said that, the similar result for the positive half-line has been missing from the literature. As we shall see in Corollary 3.3, our approach to the free local limit theorems through Boolean convolution leads to this missing Boolean to Free Bercovici-Pata Bijection on $\mathbb{R}_{+}$.

The remainder of this paper is organized into four sections. After collecting some preliminary materials in Section 2, we show the recasting of the weak convergence conditions in the next section. Local limit theorems and the convergence to free multiplicative Brownian motions are treated in Section 4. The applications in free entropy and in free unitary L\'{e}vy processes are presented in the last section.

\section{preliminaries}

\subsection{Free and Boolean convolutions on $\mathbb{R}_{+}$}
Let
$\Omega=\mathbb{C}\setminus\mathbb{R}_{+}$. For any probability law
$\mu$ on $\mathbb{R}_{+}$, we define analytic functions $\psi_{\mu},\,\eta_{\mu}:\,\Omega\rightarrow\Omega$
by \[
\psi_{\mu}(z)=\int_{\mathbb{R}_{+}}\frac{xz}{1-xz}\, d\mu(x)\quad\text{and}\quad\eta_{\mu}(z)=\frac{\psi_{\mu}(z)}{1+\psi_{\mu}(z)}.\] Observe that the imaginary part of $-\pi[1+\psi_{\mu}(1/z)]/z$ is the Poisson integral of $\mu$ for any $z$ in the complex upper half-plane $\mathbb{C}^{+}$.
It follows that the measure $\mu$ is completely determined
by the function $\psi_{\mu}$ (and hence by $\eta_{\mu}$).

In this paper the notation $\mathcal{M}_{+}$ means the set of all
Borel probability measures $\mu$ on $\mathbb{R}_{+}$ such that $\mu$ is not the point mass at the origin. For each $\mu\in\mathcal{M}_{+}$, it is known \cite{BB2005} that the $\eta$-transform
of $\mu$ has the following mapping properties: $\eta_{\mu}\left((-\infty,0)\right) \subset (-\infty,0)$, $0=\eta_{\mu}(0^{-})=\lim_{x\rightarrow0,\, x<0}\eta_{\mu}(x)$,
$\eta_{\mu}(\overline{z})=\overline{\eta_{\mu}(z)}$ for $z\in\Omega$,
and
\begin{equation}
\pi>\arg\eta_{\mu}(z)\geq\arg z,\qquad z\in\mathbb{C}^{+}.\label{eq:2.1}
\end{equation}
Here, and throughout this paper, the notation $\arg$ denotes the principal value of the argument
function with the branch cut $(-\infty,0]$ and the range $(-\pi,\pi]$. In fact, these conditions characterize the functions $\eta_{\mu}$ among all analytic maps $\eta:\Omega\rightarrow \mathbb{C}\setminus\{0\}$ with the property $\eta(\overline{z})=\overline{\eta(z)}$ for $z\in\Omega$. The above properties imply that the analytic function \[B_{\mu}(z)= \frac{z}{\eta_{\mu}(z)}\]
is well-defined in $\Omega$. As we shall see, this $B$-transform will play a role in our investigation of limit theorems.

Given $\mu,\nu\in\mathcal{M}_{+}$, the calculation of their \emph{multiplicative
free convolution} $\mu\boxtimes\nu$ involves the compositional inverse of their
$\eta$-transforms. More precisely, the
function $\eta_{\mu}$ is univalent in the left half-plane $i\mathbb{C}^{+}$, whose range $\eta_{\mu}(i\mathbb{C}^{+})$ is a subset of $i\mathbb{C}_{+}$ and contains an interval of the form $(\alpha,0)$ for some $\alpha<0$. Denoting by $\eta_{\mu}^{-1}$
the analytic inverse of $\eta_{\mu}$ and setting $\Sigma_{\mu}(z)=\eta_{\mu}^{-1}(z)/z$, then it is shown in\cite{BV1993} that \[
\Sigma_{\mu\boxtimes\nu}(z)=\Sigma_{\mu}(z)\Sigma_{\nu}(z)\]
for $z$ in some interval $(\beta,0)$, $\beta<0$.

On the other hand, the product of $B$-transforms does not always give rise to a probability measure in $\mathcal{M}_{+}$. This phenomenon has been studied in \cite{B2006} under the framework of Boolean probability theory. Roughly, the main issue here is that the mapping condition \eqref{eq:2.1} is not always preserved under the process of multiplying $B$-transforms. But when it is, it makes sense to give the following
\begin{defn}[\cite{B2006}]\label{def:Boolean}
 Given $\mu,\nu$ in $\mathcal{M}_{+}$, their \emph{multiplicative Boolean convolution} $\mu\bmultg\nu$ is defined as the unique probability law in $\mathcal{M}_{+}$ that satisfies the following identity:\[B_{\mu\bmultg\nu}(z)=B_{\mu}(z)B_{\nu}(z),\quad z\in\Omega.\]\end{defn}
\begin{remark}
If $\mu,\nu$ are probability laws in $\mathcal{M}_{+}$ with
  \begin{enumerate}
    \item $\arg \et{\mu}(z)+\arg \et{\nu}(z)-\arg z <\pi$ for $z\in\Omega\cap\mathbb{C}^+$, and
    \item at least one of the first moments of $\mu$ and $\nu$ is finite,
  \end{enumerate}
then $\mu\bmultg\nu$ is well-defined as a member in $\mathcal{M}_{+}$.
\end{remark}

The set of finite Borel measures supported on a fixed subset of $\mathbb{C}$ is equipped with the topology of weak convergence. Throughout this paper, the symbol $\Rightarrow$ will be used to indicate this weak convergence of measures. The following result characterizes the weak convergence of probability measures on $\mathbb{R}_{+}$ in terms of their $B$- and $\Sigma$-transforms.
\begin{prop}
Let $\nu,\mu_{1},\mu_{2},\cdots$ be measures in $\mathcal{M}_{+}$. Then the weak convergence $\mu_{n} \Rightarrow \nu$ holds if and
only if the sequence $\{B_{\mu_{n}}\}_{n=1}^{\infty}$ converges to $B_{\nu}$ uniformly on compact subsets of the domain $\Omega$ if and only if there exists a closed disc $D\subset i\mathbb{C}^{+}$ with real center such that the functions $\Sigma_{\mu_{n}}$ are defined in $D$ for all $n$ and the sequence $\{\Sigma_{\mu_{n}}\}_{n=1}^{\infty}$ converges to $\Sigma_{\nu}$ uniformly in the disc $D$.
\end{prop}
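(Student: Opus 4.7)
The plan is to reduce both (ii) and (iii) to the single auxiliary statement $(\star)$ that $\eta_{\mu_n}\to\eta_\nu$ locally uniformly on $\Omega$. The equivalence of $(\star)$ with (ii) is immediate from $B_\mu(z)=z/\eta_\mu(z)$ together with the fact that $\eta_\nu$ never vanishes on $\Omega$, so it suffices to establish (i)$\,\Leftrightarrow\,(\star)$ and $(\star)\,\Leftrightarrow\,$(iii).

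For (i)$\,\Leftrightarrow\,(\star)$: since $x\mapsto xz/(1-xz)$ is bounded and continuous on $\mathbb{R}_+$ for each fixed $z\in\Omega$, weak convergence $\mu_n\Rightarrow\nu$ yields pointwise convergence $\psi_{\mu_n}\to\psi_\nu$ on $\Omega$. A direct estimate shows that $\{\psi_\mu:\mu\in\mathcal{M}_+\}$ is uniformly bounded on every compact $K\subset\Omega$, so Vitali's theorem upgrades this to locally uniform convergence; dividing by $1+\psi_{\mu_n}$ (never zero on $\Omega$) gives $(\star)$. Conversely, $(\star)$ produces locally uniform convergence of $\psi_{\mu_n}$ on $\Omega$ and hence of the Cauchy transforms $G_{\mu_n}(z)=-[1+\psi_{\mu_n}(1/z)]/z$ on $\mathbb{C}^+$, from which the classical Stieltjes continuity theorem (applicable because the limit $\nu$ is a probability measure) delivers $\mu_n\Rightarrow\nu$.

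For $(\star)\,\Rightarrow\,$(iii): by the mapping properties recalled in Section 2, $\eta_\nu$ is univalent on the left half-plane $i\mathbb{C}^+$ with image an open set containing some interval $(\alpha,0)$. I select a closed disc $D$ with real center inside $\eta_\nu(i\mathbb{C}^+)$, whose preimage $K=\eta_\nu^{-1}(D)$ is a compact subset of $i\mathbb{C}^+$. Applying $(\star)$ on a slight enlargement of $K$, a standard Rouch\'e/argument-principle reasoning shows that for all large $n$ the inverse $\eta_{\mu_n}^{-1}$ is defined on $D$ and converges uniformly there to $\eta_\nu^{-1}$; dividing by $z$ yields uniform convergence of $\Sigma_{\mu_n}$ on $D$.

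The main obstacle lies in (iii)$\,\Rightarrow\,(\star)$. Uniform convergence of $\Sigma_{\mu_n}$ on $D$ immediately gives uniform convergence of $\eta_{\mu_n}^{-1}(z)=z\Sigma_{\mu_n}(z)$ on $D$, and the inverse-function argument then produces $\eta_{\mu_n}\to\eta_\nu$ uniformly on compacts of the open set $U=\eta_\nu^{-1}(\mathrm{int}\,D)\subset i\mathbb{C}^+$. The delicate task is to propagate this purely local statement to $(\star)$ on all of $\Omega$. My plan is to invoke the uniform boundedness of $\{\eta_\mu:\mu\in\mathcal{M}_+\}$ on compacts of $\Omega$ (from the estimate used earlier) to conclude that $\{\eta_{\mu_n}\}$ is a normal family on $\Omega$; any subsequential limit is then analytic on $\Omega$, agrees with $\eta_\nu$ on $U$, and hence equals $\eta_\nu$ throughout $\Omega$ by the identity principle, forcing convergence of the full sequence. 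The subtle point here is to rule out degenerate subsequential limits (such as the constant $\infty$ corresponding to mass escaping), and this is precisely where the non-degeneracy hypothesis $\nu\in\mathcal{M}_+$ together with the characterization of $\eta$-transforms recalled in Section 2 must be used.
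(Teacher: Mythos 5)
Your overall strategy—reducing both transform conditions to the auxiliary statement that $\eta_{\mu_n}\to\eta_\nu$ locally uniformly on $\Omega$—is sound, and it goes somewhat further than the paper, which only proves the equivalence of (i) and (ii) and cites \cite{BV1993} for the $\Sigma$-transform part. There is, however, one concrete error in the step (iii)~$\Rightarrow$~$(\star)$: the family $\{\eta_\mu : \mu\in\mathcal{M}_+\}$ is \emph{not} uniformly bounded on compact subsets of $\Omega$. For instance $\eta_{\delta_t}(z)=tz$, so $\eta_{\delta_t}(-1)=-t\to-\infty$ as $t\to\infty$; more generally $\eta_\mu=\psi_\mu/(1+\psi_\mu)$ can blow up when $\psi_\mu$ approaches $-1$, which does happen along sequences of measures escaping to infinity. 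The uniform bound you establish earlier is for $\{\psi_\mu\}$, not $\{\eta_\mu\}$, and it does not transfer. The conclusion (normality of $\{\eta_{\mu_n}\}$) is nonetheless correct, because each $\eta_\mu$ is an analytic self-map of the hyperbolic domain $\Omega=\mathbb{C}\setminus\mathbb{R}_+$, so Montel's theorem for functions omitting more than two values applies; alternatively, run the normal-family/identity-principle argument for $\psi_{\mu_n}$ (which really is uniformly bounded on compacts), then recover $\eta_{\mu_n}\to\eta_\nu$ from $\eta=\psi/(1+\psi)$ and the fact that $1+\psi_\nu$ never vanishes. Either fix closes the gap and the rest of your argument—including the Rouch\'e step and the sub-subsequence argument—goes through.

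On the other equivalence, your route is genuinely different from the paper's. For the implication from locally uniform convergence of the transforms back to weak convergence, the paper argues tightness explicitly: using $\psi_\nu(0^-)=0$ they pick $y<0$ with $|\psi_\nu(y)|<\varepsilon$, and then $|\psi_{\mu_n}(y)|\le\varepsilon$ for large $n$ gives $\mu_n((|y|^{-1},\infty))\le 2\varepsilon$. You instead transfer to Cauchy transforms $G_{\mu_n}(z)=[1+\psi_{\mu_n}(1/z)]/z$ and invoke the Stieltjes (Grommer--Hamburger) continuity theorem. Both work; the paper's argument is self-contained and makes the mechanism preventing escape of mass visible, while yours outsources that step to a standard theorem. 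You also have a sign error in the formula for $G_{\mu_n}$ (no minus sign in front), but this is immaterial.
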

\begin{proof}
The characterization involving the $\Sigma$-transform can be found in \cite{BV1993}. We shall prove the one with the $B$-transform.
Assume first that $\mu_{n}$ converges weakly to $\nu$. Since $\left|xz\right|<\left|1-xz\right|$
for any $x\in\mathbb{R}_{+}$ and for any $z\in i\mathbb{C}^{+}$, we have $\psi_{\mu_{n}}(z)\rightarrow\psi_{\nu}(z)$
for all $z$ in $i\mathbb{C}^{+}$. Note that this convergence is in fact
uniform over any compact subset of $\Omega$, because $\{\psi_{\mu_{n}}\}_{n=1}^{\infty}$
is a normal family of analytic self-maps of $\Omega$. This
implies the convergence result for $\{ B_{\mu_{n}}\}_{n=1}^{\infty}$,
since $B_{\mu_{n}}(z)=[z+z\psi_{\mu_{n}}(z)]/\psi_{\mu_{n}}(z)$.

Conversely, the local uniform convergence $B_{\mu_{n}}\rightarrow B_{\nu}$ in $\Omega$ shows that the sequence $\psi_{\mu_{n}}$ converges, locally
uniformly on $ i\mathbb{C}^{+}$, to the map $\psi_{\nu}$. Now, let $\varepsilon>0$ be arbitrary
but fixed. Since $\psi_{\nu}(0^{-})=0$, there exists a $y=y(\varepsilon)<0$
such that $\left|\psi_{\nu}(y)\right| < \varepsilon$. Moreover,
the limit $\lim_{n\rightarrow\infty}\psi_{\mu_{n}}(y)=\psi_{\nu}(y)$
implies that there exists $N=N(\varepsilon)>0$ such that \[
\varepsilon\geq\left|\psi_{\mu_{n}}(y)\right|=\int_{\mathbb{R}_{+}}\frac{x\left|y\right|}{1+x\left|y\right|}\, d\mu_{n}(x)\geq2^{-1}\mu_{n}((\left|y\right|^{-1},\infty))\]
for any $n\geq N$, whence the family $\{\mu_{n}\}_{n=1}^{\infty}$
is tight. Therefore, the sequence $\mu_{n}$ tends weakly to the measure
$\nu$, because any weak limit of $\{\mu_{n}\}_{n=1}^{\infty}$ is
uniquely determined by the convergence $\psi_{\mu_{n}}\rightarrow\psi_{\nu}$
in $i\mathbb{C}^{+}$.
\end{proof}

The class of infinitely divisible measures plays a fundamental role
in limit theorems for free random variables. Recall that a measure
$\mu\in\mathcal{M}_{+}$ is said to be \emph{$\boxtimes$-infinitely
divisible} if for every positive integer $n$ there exists a measure
$\mu_{n}\in\mathcal{M}_{+}$ such that $(\mu_{n})^{\boxtimes n}=\mu$. Likewise, the infinite divisibility relative to other convolutions appearing in this paper is defined in the same way.

Infinitely divisible laws can be characterized through their $\Sigma$-transforms \cite{BV1993}. For $\mu\in\mathcal{M}_{+}$, the mapping
properties of the function $\Sigma_{\mu}$ show that there is an open set $V \subset i\mathbb{C}^{+}$, intersecting $(-\infty, 0)$, such that the principal logarithm $u=\log\Sigma_{\mu}=\ln\left|\Sigma_{\mu}\right|+i\arg\Sigma_{\mu}$
is defined locally as an analytic function in $V$. The law $\mu$ is $\boxtimes$-infinitely
divisible \emph{}if and only if the logarithm $u$ can be extended
analytically to the entire domain $\Omega$ (in which case we still denote it
by $u$) with the properties: \[\Sigma_{\mu}(z)=\exp(u(z)), \quad z \in \Omega,\] and $u(\overline{z})=\overline{u(z)}$, $\Im u(z) \leq 0$ for all $z \in \mathbb{C}^{+}$. Moreover, the extension $u$ possesses a unique
Nevanlinna representation: \begin{equation}
u(z)=\gamma+\int_{[0,\infty]}\frac{1+xz}{z-x}\, d\sigma(x),\qquad z\in\Omega,\label{eq:2.2}
\end{equation} where $\gamma\in\mathbb{R}$ and $\sigma$ is a finite Borel measure on the compact space $[0,\infty]$. Conversely, any such an integral $u$ determines a unique $\boxtimes$-infinitely divisible law $\mu$ via the functional equation $\Sigma_{\mu}(z)=\exp(u(z))$. In the sequel, we will write $u=u_{\gamma,\sigma}$ and $\mu=\mu_{\boxtimes}^{\gamma,\sigma}$ to indicate this correspondence which is usually referred to as the \emph{free L\'{e}vy-Hin\v{c}in's formula}.

The study of $\bmultg$-infinite divisibility is somehow simpler than that in
the free case. It turns out that every measure $\mu\in\mathcal{M}_{+}$ is $\bmultg$-infinitely divisible and its $B$-transform admits a
L\'{e}vy-Hin\v{c}in type integral formula (see \cite{B2006}). This
is related to the fact that the function $B_{\mu}$ never
vanishes in the domain $\Omega$, and its principal logarithm
$v=\log B_{\mu}$ is always defined and analytic in $\Omega$. Moreover,
it follows from \eqref{eq:2.1} that $\Im v(z)\leq0$ for $z\in\mathbb{C}^{+}$
and $v((-\infty,0))\subset\mathbb{R}$. However, note that in the Nevanlinna
form
\begin{equation}
v(z)=\gamma+\int_{[0,\infty)}\frac{1+xz}{z-x}\, d\sigma(x),\qquad z\in\Omega,\label{eq:2.3}
\end{equation}
of the function $v$, one has $\sigma(\{\infty\})=0$ (cf. \cite[Lemma 3.3]{HZ}).
As in the free case, we will write $\mu=\mu_{\bmultg}^{\gamma,\sigma}$ to indicate \eqref{eq:2.3}.

\subsection{Convolutions on the unit circle}
Let $\MT$ be the set of all probability measures with non-zero first moment on the unit circle $\mathbb{T}$.
For $\mu\in\MT$, the functions $\psi_{\mu}$ and $\eta_{\mu}$ are defined by the same formulas as in Section 2.1, with the integrals computed over the circle $\mathbb{T}$. The domain of definition for these functions is now the open unit disc $\mathbb{D}$. Note that $|\et{\mu}(z)|\leq |z|$ for all $z\in\mathbb{D}$ and that the derivative $\et{\mu}'(0)$ is equal to $\text{m}\left(\mu\right)$, the first moment (the mean) of $\mu$. Also, if $\eta: \mathbb{D} \rightarrow  \mathbb{C}$ is an analytic map satisfying $\eta(0)=0$, $\eta^{\prime}(0)\neq 0$, and $|\eta(z)| < 1$ for $z\in\mathbb{D}$, then we have $\eta=\eta_{\mu}$ for some measure $\mu \in \MT$. Thus, the function $\et{\mu}$ is always invertible in a neighborhood of the origin, and we further define the $\Sigma$-transform of $\mu$ by \[\Sigma_{\mu}(z)=\frac{\et{\mu}^{-1}(z)}{z}\] for $z$ near the origin, where the value of $\Sigma_{\mu}(0)$ is given by $1/\text{m}\left(\mu\right)$. Given measures $\mu,\nu\in\MT$, the mean of their free convolution $\mu \boxtimes \nu$ is given by \[\text{m}\left(\mu \boxtimes \nu\right)=\text{m}\left(\mu\right)\text{m}\left(\nu\right),\] and we have that $\Sigma_{\mu\boxtimes\nu}(z)=\Sigma_{\mu}(z)\Sigma_{\nu}(z)$ for $z$ in a neighborhood of zero on which all three functions involved are defined.

In contrast with the positive line case, the Boolean convolution of any two probability measures on the circle $\mathbb{T}$ is always well-defined. The following characterization for Boolean convolution was proved in \cite{Franz}: Given two probability measures $\mu,\nu$ on $\mathbb{T}$, their multiplicative Boolean convolution $\mu\bmultg\nu$ is the unique probability measure on $\mathbb{T}$ such that \[z\, \eta_{\mu\bmultg\nu}(z)=\eta_{\mu}(z)\, \eta_{\nu}(z),\quad z\in \mathbb{D}.\]

As shown in \cite{BV1992}, weak convergence of probability measures on $\mathbb{T}$ is equivalent to the local uniform convergence of the corresponding $\eta$- and $\Sigma$-transforms on their domains of definition.

We end this section with a brief discussion of infinite divisibility.
We know from \cite{BV1992} that a measure $\mu\in\MT$ is $\boxtimes$-infinitely divisible if and only if there exist $\alpha\in\mathbb{T}$ and a finite positive Borel measure $\sigma$ on $\mathbb{T}$ such that the $\Sigma$-transform has the form \begin{equation}\label{eq:2.4}
  \Sigma_{\mu}(z)=\alpha \exp \left(\int_{\mathbb{T}}\frac{1+\xi z}{1-\xi z}d\,\sigma(\xi)\right), \quad z\in \mathbb{D}.
\end{equation}
Also, the only $\boxtimes$-infinitely divisible law with zero mean is the Haar measure $d\theta/2\pi$ on $\mathbb{T}$. Likewise, a measure $\nu$ on $\mathbb{T}$  is $\bmultg$-infinitely divisible if and only if either $\nu=d\theta/2\pi$ or else the function  $\eta_{\nu}(z)\neq 0$ for all $z\in\mathbb{D}\setminus\{0\}$. In the latter case, the $B$-transform $B_{\nu}(z)=z/\eta_{\nu}(z)$ is well-defined and analytic in $\mathbb{D}$, where \[B_{\nu}(0)=1/\eta_{\nu}^{\prime}(0)=1/\text{m}\left(\nu\right).\] Furthermore, we can write $B_{\nu}(z)=\exp(u(z))$ for some analytic function $u$ where $\Re u(z) \geq 0$ for all $z \in \mathbb{D}$. In particular, the Herglotz representation of $u$ shows that such a function $B_{\nu}$ is the $\Sigma$-transform
of a $\boxtimes$-infinitely divisible law on $\mathbb{T}$. Conversely, the regularity results proved in \cite[Proposition 3.3]{BB2005} imply that the $\eta$-transform of a freely infinitely divisible law has no zeros in $\mathbb{D}\setminus\{0\}$. Thus, each $\boxtimes$-infinitely divisible law is also $\bmultg$-infinitely divisible.

\subsection{Subordination and global inversion}
An important feature of free convolution is analytic subordination. Namely, given measures $\mu,\nu \in \mathcal{M}_{+}$, there exist unique analytic functions $\omega_1, \omega_2 :\Omega \rightarrow \Omega$ such that the formulas \[\eta_{\mu \boxtimes \nu}(z)=\eta_{\mu}(\omega_1(z))=\eta_{\nu}(\omega_2(z))\] hold in the domain $\Omega$. Likewise, if $\mu$ and $\nu$ are supported on the circle $\mathbb{T}$, then the corresponding subordination functions $\omega_1, \omega_2$ will be two analytic self-maps of the unit disc $\mathbb{D}$ with $\omega_1(0)=0=\omega_2(0)$.

An earlier partial result of the subordination phenomenon was proved by Voiculescu in his work of free entropy, and later Biane proved the subordination result in full generality. (We refer the reader to the article \cite{VoiCoalgebra} for a conceptual understanding of analytic subordination in free probability.) To treat free convolution powers on $\mathbb{R}_{+}$, we require the following version of analytic subordination from the paper \cite{BB2005}. Let $\mu$ be a measure in $\mathcal{M}_{+}$ and let $n\geq 2$ be an integer. Then there exists a unique conformal map $\omega_n:\Omega \rightarrow \Omega$ such that the $\eta$-transform $\eta_{\mu^{\boxtimes n}}=\eta_{\mu} \circ \omega_n$ in $\Omega$, and in this case the map $H_n(z)=z[B_{\mu}(z)]^{n-1}$ serves as an analytic left inverse for the function $\omega_n$ in the set $\Omega$.

In Section 4, we will use the following global inversion result to construct appropriate subordination functions for measures supported on $\mathbb{T}$. This result is a summary of  Theorem 4.4 and Proposition 4.5 in \cite{BB2005}.
\begin{prop}
Let $\Phi:\mathbb{D} \rightarrow \mathbb{C}\cup \{\infty\}$ be a meromorphic function such that $\Phi(0)=0$ and $\left|\Phi(z)\right| \geq \left|z\right|$ for every $z\in \mathbb{D}$.
\begin{enumerate}[$(1)$]
\item There exists a unique continuous and one-to-one map $\omega:\overline{\mathbb{D}}\rightarrow \overline{\mathbb{D}}$ such that $\omega\left(\mathbb{D}\right) \subset \mathbb{D}$, the function $\omega$ is analytic in $\mathbb{D}$, and $\Phi\left(\omega(z)\right)=z$ for $z \in \mathbb{D}$.
\item The range $\omega\left(\mathbb{D}\right)$ is a simply connected domain whose boundary is the simple closed curve $\omega\left(\mathbb{T}\right)$, and we have $\omega\left(\mathbb{D}\right)=\{z\in \mathbb{D}:\left|\Phi(z)\right|<1\}$.
\item A point $z\in \mathbb{D}$ lies on the curve $\omega\left(\mathbb{T}\right)$ if and only if $\left|\Phi(z)\right|=1$.
\item If $\xi\in \omega\left(\overline{\mathbb{D}}\right) \cap \mathbb{T}$, then the entire radius $\{r\xi:0 \leq r<1\}$ is contained in $ \omega\left(\mathbb{D}\right)$.
\item If $\xi \in \mathbb{T}$ is such that $\left|\omega(\xi)\right|<1$, then $\omega$ can be extended analytically to a neighborhood of $\xi$.
\end{enumerate}
\end{prop}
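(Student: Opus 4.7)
My plan is to construct $\omega$ as the analytic inverse of $\Phi$ on the connected component of $\{z \in \mathbb{D} : |\Phi(z)| < 1\}$ containing $0$, and then to analyze its boundary behavior. Writing $\Phi(z) = z h(z)$, the hypothesis forces $h$ to be meromorphic on $\mathbb{D}$ with $|h(z)| \geq 1$ at all its regular points; in particular $\Phi'(0) = h(0) \neq 0$ and $\Phi$ has no zeros in $\mathbb{D} \setminus \{0\}$. The set $V = \{z \in \mathbb{D} : \Phi(z) \in \mathbb{D}\}$ is open, contains $0$, and avoids every pole of $\Phi$; let $V_0$ denote its connected component containing $0$.

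I claim $\Phi|_{V_0} : V_0 \to \mathbb{D}$ is a conformal equivalence. For properness, suppose $(z_n) \subset V_0$ with $\Phi(z_n) \to w \in \mathbb{D}$ and pass to a subsequential limit $z_* \in \overline{\mathbb{D}}$. If $|z_*| = 1$, then $|w| = \lim |\Phi(z_{n_k})| \geq \lim |z_{n_k}| = 1$, contradicting $w \in \mathbb{D}$; so $z_* \in \mathbb{D}$ and by continuity $\Phi(z_*) = w$. Since $V$ is open around $z_*$ and eventually contains all $z_{n_k}$, the point $z_*$ lies in the same component as those $z_{n_k}$, namely $V_0$. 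Being proper and holomorphic, $\Phi|_{V_0}$ is a finite branched covering; its fiber over $0$ is $\{0\}$ alone with multiplicity one, so the degree is $1$ and $\Phi|_{V_0}$ is a biholomorphism. Define $\omega := (\Phi|_{V_0})^{-1} : \mathbb{D} \to V_0$, which satisfies $\omega(0) = 0$ and, by the Schwarz lemma, $|\omega(z)| \leq |z|$.

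For parts (2), (3) and the continuous, one-to-one extension of $\omega$ to $\overline{\mathbb{D}}$, the crucial input is that $V_0$ is a Jordan domain; Carathéodory's prime-end theorem then provides the homeomorphism $\overline{\mathbb{D}} \to \overline{V_0}$. The boundary $\partial V_0$ decomposes into the level arcs $\{|\Phi|=1\}$ inside $\mathbb{D}$ together with arcs on $\mathbb{T}$, and the inequality $|\Phi(z)| \geq |z|$ rules out the pathologies that would keep the boundary from being a single simple closed curve. Part (3) is then the identification $\omega(\mathbb{T}) = \partial V_0 \cap \mathbb{D} = \{z \in \mathbb{D} : |\Phi(z)| = 1\} \cap \overline{V_0}$. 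For (5), if $z_0 := \omega(\xi) \in \mathbb{D}$, continuity of $\omega$ on a one-sided neighborhood of $\xi$ combined with $\Phi \circ \omega = \mathrm{id}$ forces $\Phi$ to be locally injective at $z_0$, hence $\Phi'(z_0) \neq 0$; the local analytic inverse of $\Phi$ near $z_0$ then extends $\omega$ analytically to a full complex neighborhood of $\xi$.

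Part (4) is the most delicate step. Given $\xi \in \overline{V_0} \cap \mathbb{T}$, I argue by contradiction: if $r^* \in (0,1)$ is the smallest parameter with $r^*\xi \notin V_0$, then by minimality $[0, r^*\xi) \subset V_0$ and $r^*\xi \in \partial V_0 \cap \mathbb{D}$, so $|\Phi(r^*\xi)| = 1$. The Jordan curve $\partial V_0$ must then pass through $r^*\xi$ and also through $\xi \in \partial V_0 \cap \mathbb{T}$, while avoiding the radial segment $(r^*\xi, \xi)$; combined with the lower bound $|\Phi(t\xi)| \geq t$ on this segment, which constrains how $(r^*\xi, \xi)$ can sit relative to $V_0$, the Jordan curve is forced to self-cross, contradicting part (2). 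I expect this radial-containment argument and the verification that $V_0$ is a Jordan domain to be the principal technical obstacles; once those are in hand, everything else is essentially formal.
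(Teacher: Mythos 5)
The paper does not prove this proposition: it is stated explicitly as a summary of Theorem 4.4 and Proposition 4.5 of \cite{BB2005}, so there is no internal proof to compare against. Your construction of $\omega$ on the open disc is correct and is the right starting point: let $V_0$ be the component of $\{z \in \mathbb{D} : |\Phi(z)| < 1\}$ containing the origin, show $\Phi|_{V_0}$ is a proper holomorphic map of degree one onto $\mathbb{D}$, and set $\omega = (\Phi|_{V_0})^{-1}$. But the boundary behavior is where the content of the proposition lies, and your treatment of it has real gaps that you yourself flag. You assert that $V_0$ is a Jordan domain because ``$|\Phi(z)| \geq |z|$ rules out the pathologies,'' but that is not an argument — it is precisely the thing to be proved, and it underlies the continuity and injectivity of $\omega$ on $\overline{\mathbb{D}}$ in (1) and all of (2)--(3). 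Your proof of (4) then invokes the Jordan-curve property from (2), yet establishing (2) is exactly what requires the radial structure that (4) describes, so as written the dependence is circular; you also never make the claimed self-crossing of $\partial V_0$ precise. Finally, your argument for (5) has a separate gap: $\Phi \circ \omega = \mathrm{id}$ on a half-neighborhood of $\xi$ does not force $\Phi'(\omega(\xi)) \neq 0$, since near a critical point of order $k \geq 2$ the map $\Phi$ is $k$-to-one and $\omega$ could still select one sheet continuously and injectively.

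The missing idea is the Herglotz structure encoded by the hypothesis. Writing $h(z) = \Phi(z)/z$, the function $1/h$ is holomorphic from $\mathbb{D}$ into $\overline{\mathbb{D}}$, so (at least when $\Phi$ has no poles) one has $\Phi(z) = z\exp\left(u(z)\right)$ with $\Re u \geq 0$, hence $\Re u$ is a nonnegative Poisson integral. This is the same structure that appears in the free L\'{e}vy--Hin\v{c}in formula \eqref{eq:2.4} and that the paper exploits immediately afterward in the proof of Proposition 2.4, via the strict monotonicity in $r$ of $r \mapsto \log\left|\Phi(re^{i\theta})\right| / \log r$. That monotonicity forces each radius $\{r\xi : 0 < r < 1\}$ to cross the level set $\{|\Phi| = 1\}$ at most once, which is the real substance behind (4), and it provides the quantitative boundary control needed to show $\partial V_0$ is a Jordan curve and to exclude critical points of $\Phi$ on $\partial V_0 \cap \mathbb{D}$ for (5). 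Without developing this, what you have is a plausible outline with the key lemmas left as acknowledged obstacles, not a proof.
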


The next result will also be used in Section 4. Its proof is a consequence of the preceding proposition. (See also Theorem 3.2 in \cite{Zhong2}.)
\begin{prop}
Let $\nu$ be a $\boxtimes$-infinitely divisible law in $\mathcal{M}_{\mathbb{T}}$ and $\xi \in \mathbb{T}$. Then the radial line segment $L=\{r\xi:0 <r \leq 1\}$ can only intersect the boundary of $\eta_{\nu}\left(\mathbb{D}\right)$ once.
\end{prop}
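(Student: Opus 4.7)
The plan is to apply Proposition~2.3 to the analytic function $\Phi(z)=z\Sigma_\nu(z)$ on $\mathbb{D}$. By the free L\'evy--Hin\v{c}in formula \eqref{eq:2.4}, $\Sigma_\nu$ is analytic on $\mathbb{D}$ with a positive finite representing measure $\sigma$ on $\mathbb{T}$, so $\Phi(0)=0$, and the identity
\[
\log|\Sigma_\nu(z)|=\int_{\mathbb{T}}\frac{1-|z|^{2}}{|1-\eta z|^{2}}\,d\sigma(\eta)\geq 0
\]
yields $|\Phi(z)|=|z|\,|\Sigma_\nu(z)|\geq|z|$. Let $\omega$ be the map produced by Proposition~2.3. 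Both $\omega$ and $\eta_\nu$ are analytic sections of $\Phi$ fixing $0$ and taking values in $\mathbb{D}$, so they agree in a neighborhood of $0$ by the implicit function theorem applied to $\Phi'(0)=\Sigma_\nu(0)\neq 0$, and hence on all of $\mathbb{D}$ by analytic continuation. In particular, $\eta_\nu$ extends continuously and injectively to $\overline{\mathbb{D}}$, and its boundary values trace out the Jordan curve $\eta_\nu(\mathbb{T})=\partial\eta_\nu(\mathbb{D})$.

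I then split into cases based on whether $\xi\in\eta_\nu(\overline{\mathbb{D}})$. If $\xi\in\eta_\nu(\overline{\mathbb{D}})\cap\mathbb{T}$, part~(4) of Proposition~2.3 gives $\{r\xi:0\leq r<1\}\subset\eta_\nu(\mathbb{D})$, so $L$ meets $\eta_\nu(\mathbb{T})$ only at the endpoint $\xi$. Otherwise $\xi\notin\eta_\nu(\overline{\mathbb{D}})$, so $\xi$ is not a boundary intersection; by part~(3) the interior intersections $L\cap\eta_\nu(\mathbb{T})\cap\mathbb{D}$ are precisely those points $r\xi$, with $0<r<1$, at which $|\Phi(r\xi)|=1$. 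Thus the task reduces to showing that the real-analytic function
\[
F(r):=|\Phi(r\xi)|=r\exp\!\left(\int_{\mathbb{T}}\frac{1-r^{2}}{|1-\eta r\xi|^{2}}\,d\sigma(\eta)\right)
\]
attains the value $1$ at most once on $(0,1)$.

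To accomplish this I would establish that $\eta_\nu(\mathbb{D})$ is star-shaped with respect to $0$. Since $\eta_\nu$ is univalent on $\mathbb{D}$ with $\eta_\nu(0)=0$, this is equivalent to the starlikeness condition $\Re\bigl(z\eta_\nu'(z)/\eta_\nu(z)\bigr)>0$ on $\mathbb{D}$. Substituting $w=\eta_\nu(z)$ and using $\Phi'(w)=(1+wu'(w))\Sigma_\nu(w)$ with $u=\log\Sigma_\nu$, the condition becomes
\[
\Re\bigl(1+wu'(w)\bigr)>0 \qquad \text{for every } w\in\eta_\nu(\mathbb{D}).
\]
The L\'evy--Hin\v{c}in representation yields $wu'(w)=\int_{\mathbb{T}}2K(\eta w)\,d\sigma(\eta)$, where $K(\zeta)=\zeta/(1-\zeta)^{2}$ is the Koebe function; the classical range bound $\Re K>-1/4$ on $\mathbb{D}$ then gives $\Re(1+wu'(w))>1-\sigma(\mathbb{T})/2$ on all of $\mathbb{D}$, which closes the argument whenever $\sigma(\mathbb{T})<2$.

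The main obstacle is the case $\sigma(\mathbb{T})\geq 2$, in which the uniform Koebe bound is non-positive and starlikeness can fail on part of $\mathbb{D}$. The remedy is to exploit the defining constraint $|w\Sigma_\nu(w)|<1$ of $\eta_\nu(\mathbb{D})$, which forces $w$ away from the boundary points of $\mathbb{D}$ where $|\Sigma_\nu|$ blows up (in particular, away from $\bar\eta$ for each atom $\eta$ of $\sigma$); a refined analysis of the Herglotz representation of $u'$ on this smaller region recovers $\Re(1+wu'(w))>0$ throughout $\eta_\nu(\mathbb{D})$. Once star-shapedness is established, any radial segment from the origin crosses $\partial\eta_\nu(\mathbb{D})$ at most once, proving the claim.
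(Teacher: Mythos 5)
Your setup is correct and runs parallel to the paper's: you apply Proposition~2.3 to $\Phi(z)=z\Sigma_\nu(z)$, identify the global inverse $\omega$ with $\eta_\nu$, dispose of the case $\xi\in\eta_\nu(\overline{\mathbb{D}})\cap\mathbb{T}$ via part~(4), and reduce the interior case to showing that $|\Phi(r\xi)|=1$ has at most one solution on $(0,1)$. Up to this point the argument is sound and essentially the paper's. You also correctly observe that the remaining claim is equivalent to the star-shapedness of $\eta_\nu(\mathbb{D})$ about the origin, and that this in turn is equivalent to $\Re\bigl(1+wu'(w)\bigr)>0$ on $\eta_\nu(\mathbb{D})$.

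The gap is in the last step. The uniform Koebe bound $\Re K>-1/4$ yields $\Re(1+wu'(w))>1-\sigma(\mathbb{T})/2$ on all of $\mathbb{D}$, so the argument as written only covers $\sigma(\mathbb{T})<2$; for $\sigma(\mathbb{T})\ge 2$ you acknowledge the bound is vacuous and invoke a ``refined analysis of the Herglotz representation on the smaller region $\eta_\nu(\mathbb{D})$'' that you do not actually carry out. This is not a cosmetic omission: for example, for $\nu=\lambda_t$ one has $\sigma(\mathbb{T})=t/2$, so your bound already fails for $t\ge 4$ --- precisely the regime where the geometry of $\eta_\nu(\mathbb{D})$ changes and the domain pulls away from $\mathbb{T}$. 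Without a concrete argument on $\eta_\nu(\mathbb{D})$ the proof is incomplete.

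The paper sidesteps starlikeness entirely and argues on the ray itself. If $z_1=r_1e^{i\theta}$ and $z_2=r_2e^{i\theta}$ are two interior boundary points, then $|\Phi(z_j)|=1$ by Proposition~2.3(3), i.e.\ $\log r_j+\log|\Sigma_\nu(r_je^{i\theta})|=0$. Rewriting this via the Poisson integral of $\sigma$ gives
\[
h(r)\;=\;1-\frac{\log|\Phi(re^{i\theta})|}{\log r}\;=\;\int_{-\pi}^{\pi}\frac{r^{2}-1}{\log r\,\bigl(1-2r\cos(\theta+x)+r^{2}\bigr)}\,d\rho(x),
\]
and both $r_1$, $r_2$ satisfy $h(r)=1$. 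The contradiction comes from the (elementary, one-variable) fact that each integrand, and hence $h$, is strictly increasing on $(0,1)$; this holds with no restriction on $\sigma(\mathbb{T})$. So the two approaches aim at the same geometric conclusion (star-shapedness of $\eta_\nu(\mathbb{D})$), but the paper replaces your pointwise bound on $\Re(1+wu'(w))$ by a normalization $\log|\Phi|/\log r$ whose monotonicity is uniform in $\sigma$. If you want to rescue your route, you would need to prove $\Re(1+wu'(w))>0$ using the constraint $|w\Sigma_\nu(w)|<1$ in an essential way; the paper's $h(r)$ is exactly the device that encodes this constraint along a ray.
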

\begin{proof}
First, the free L\'{e}vy-Hin\v{c}in's formula \eqref{eq:2.4} shows that Proposition 2.3 can be applied to the map \[\Phi(z)=z\Sigma_{\nu}(z)=\alpha z \exp\left(\int_{x \in (-\pi,\pi]}\frac{1+e^{ix} z}{1-e^{ix} z}\,d\sigma(e^{ix})\right), \quad z\in \mathbb{D},\] where in this case the inverse function $\omega$ is precisely the $\eta$-transform $\eta_{\nu}$. Here the measure $\sigma$ will be assumed to be a non-zero measure, since the desired conclusion holds trivially in the case of $\sigma=0$. Also, we write $d\rho(x)=d\sigma(e^{ix})$ for notational convenience.

Now, suppose to the contrary that the given radius $L$ intersects the boundary $\partial \eta_{\nu}\left(\mathbb{D}\right)$ more than once, say, at $z_1$ and $z_2$ in $\overline{\mathbb{D}}$. If one of these two boundary points lies on the circle $\mathbb{T}$, then Proposition 2.3 (4) shows that the other one must be in the interior $\eta_{\nu}\left(\mathbb{D}\right)$, which is not possible. Hence, both $z_1$ and $z_2$ must belong to the unit disc $\mathbb{D}$, and we have $\left|\Phi(z_1)\right|=1=\left|\Phi(z_2)\right|$ by Proposition 2.3 (3). This last formula shows that the function \[h(r)=1-\log \left|\Phi\left(re^{i\theta}\right)\right|/\log r=\int_{-\pi}^{\pi}\frac{r^2-1}{\log r(1-2r\cos\left(\theta+x\right)+r^2)} \, d\rho(x)\] assumes the same value $1$ at two distinct points $r_1=\left|z_1\right|$ and $r_2=\left|z_2\right|$ in the interval $(0,1)$, where the polar angle $\theta=\arg z_1=\arg z_2$ here is treated as a constant determined by the radius $L$. However, such a result contradicts the fact that the function $h$ is strictly increasing in the interval $(0,1)$. Therefore the line $L$ can only intersect $\partial \eta_\nu\left(\mathbb{D}\right)$ once.
\end{proof}

\section{weak limit theorems and $B$-transforms}
Suppose that $\{\mu_{nj}:n\geq1,\,1\leq j\leq k_{n}\}$ is a triangular
array of probability measures on $\mathbb{R}_{+}$, which satisfies
the following \emph{infinitesimality condition}: \[
\lim_{n\rightarrow\infty}\min_{1\leq j\leq k_{n}}\mu_{nj}((1-\varepsilon,1+\varepsilon))=1\]
for every fixed $\varepsilon>0$. By virtue of this condition, the measures $\mu_{nj}$ all belong to the set $\mathcal{M}_{+}$ as long as $n$ is sufficiently
large, and hence their $B$-transforms are well-defined in the domain
$\Omega$.  Without loss of generality, we shall assume that all infinitesimal arrays in this section are in the set $\mathcal{M}_{+}$.
Given a sequence $c_{n}>0$ and denoting by $\delta_{c_n}$ the point mass concentrated at the point $c_n$, our goal here is to characterize
the weak convergence of the free convolutions \[\nu_{n}=\delta_{c_{n}}\boxtimes\mu_{n1}\boxtimes\mu_{n2}\boxtimes\cdots\boxtimes\mu_{nk_{n}}\] in terms of convergence properties of the corresponding $B$-transforms;
for this will become useful later when we investigate free local limit theorems.

To this purpose, we follow \cite{BWang2008} and introduce a scaled array $\{\mu_{nj}^{\circ}\}_{n,j}$
as follows. Define positive numbers\[
b_{nj}=\exp\left(\int_{1/e}^{e}\log x\, d\mu_{nj}(x)\right)\]
and measures $\mu_{nj}^{\circ}$ by $\mu_{nj}^{\circ}(A)=\mu_{nj}(b_{nj}A)$
for every Borel set $A$ on $\mathbb{R}_{+}$. The resulting array
$\{\mu_{nj}^{\circ}\}_{n,j}$ remains infinitesimal, and we have
\[
\lim_{n\rightarrow\infty}\max_{1\leq j\leq k_{n}}\left|b_{nj}-1\right|=0.\]
Thus, the $B$-transform of $\mu_{nj}^{\circ}$ is defined
in $\Omega$ and \[
B_{\mu_{nj}^{\circ}}(b_{nj}z)=b_{nj}B_{\mu_{nj}}(z),\qquad z\in\Omega.\]

Next, we introduce the auxiliary functions \begin{equation}
g_{nj}(z)=\int_{\mathbb{R}_{+}}\frac{(z-1)(1-x)}{xz-1}\, d\mu_{nj}^{\circ}(x).\label{eq:3.1}\end{equation}
These are analytic functions defined in $\Omega$ with the symmetry
$g_{nj}(\overline{z})=\overline{g_{nj}(z)}$ for $z \in \Omega$, and they map $\mathbb{C}^{+}$ into the lower
half-plane $\mathbb{C}^{-}$. An important feature of the function
$g_{nj}$ is that its real and imaginary parts are asymptotically
comparable in the sense that for any closed disk $D$ in the left half-plane $i\mathbb{C}^{+}$,
there exists a constant $M=M(D)>0$ such that if $n$ is sufficiently
large, we have \begin{equation}
\left|\Re g_{nj}(z)\right|\leq M\left|\Im g_{nj}(z)\right|, \quad z\in D, \quad 1\leq j\leq k_{n}.\label{eq:3.2}\end{equation}
In case $\mu_{nj}(\{0\})=0$,
this estimate was established in Lemma 3.1 of \cite{BWang2008} and it played the key role in proving the free limit theorems therein. The inequality
\eqref{eq:3.2} for general measures follows from that special case
and the decomposition $g_{nj}(z)=\mu_{nj}^{\circ}(\{0\})(1-z)+\widetilde{g_{nj}}(z)$,
where the function $\widetilde{g_{nj}}(z)$ is defined as in the formula
\eqref{eq:3.1} with the integral calculated over the restriction
of the measure $\mu_{nj}^{\circ}$ to the set $(0,\infty)$.

The proof of our main result relies on the following observation.

\begin{lemma}
Let $D\subset i\mathbb{C}^{+}$ be any closed disk with real center
such that all functions $\Sigma_{\mu_{nj}^{\circ}}$ are defined in
$D$. Then there exists a smaller closed disk $D_{1}\subset D$, also
with real center, such that both estimates \textup{(i)} $\log B_{\mu_{nj}^{\circ}}(b_{nj}z)=g_{nj}(z)(1+o(1))$
and \textup{(ii)} $\log\Sigma_{\mu_{nj}^{\circ}}(z)=g_{nj}(z)(1+o(1))$
hold uniformly for $1\leq j\leq k_{n}$ and for $z\in D_{1}$ as $n\rightarrow\infty$.
\end{lemma}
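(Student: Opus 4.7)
The plan is to first derive an exact algebraic identity expressing $B_{\mu_{nj}^\circ}-1$ as a simple rational function of $g_{nj}$; then invoke infinitesimality to pass to the asymptotic form $\log B = g_{nj}(1+o(1))$ claimed in (i); and finally deduce (ii) from (i) via the functional equation $\Sigma_\mu(z) = B_\mu(z\Sigma_\mu(z))$, which follows from $\eta_\mu(z\Sigma_\mu(z))=z$ and $B_\mu = z/\eta_\mu$.

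For the algebraic identity, the starting point is $B_\mu(w)-1 = [w+(w-1)\psi_\mu(w)]/\psi_\mu(w)$. A short computation verifies
\begin{equation*}
w+(w-1)\psi_\mu(w) = \int_{\mathbb{R}_+}\frac{w(1-x)}{1-xw}\,d\mu(x),
\end{equation*}
and since $(w-1)/(xw-1) = (1-w)/(1-xw)$, this equals $wg_{nj}(w)/(1-w)$ when $\mu=\mu_{nj}^\circ$. A parallel manipulation gives $\psi_{\mu_{nj}^\circ}(w) = (w/(1-w))[1-g_{nj}(w)/(1-w)]$, so combining these yields the exact formula
\begin{equation*}
B_{\mu_{nj}^\circ}(w)-1 = \frac{g_{nj}(w)}{1-g_{nj}(w)/(1-w)}, \qquad w\in\Omega.
\end{equation*}

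To pass to asymptotics, choose a smaller closed disk $D_1 \subset D$ with real center so that $b_{nj}D_1 \subset D$ for all large $n$ (possible since $b_{nj} \to 1$ uniformly). Infinitesimality implies $g_{nj} \to 0$ uniformly on $D$; setting $w=b_{nj}z$ in the identity, the denominator tends uniformly to $1$, giving $B_{\mu_{nj}^\circ}(b_{nj}z)-1 = g_{nj}(b_{nj}z)(1+o(1))$. Since the left side is uniformly small, the expansion $\log(1+\zeta)=\zeta(1+O(\zeta))$ converts this into $\log B_{\mu_{nj}^\circ}(b_{nj}z) = g_{nj}(b_{nj}z)(1+o(1))$. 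The final replacement of $g_{nj}(b_{nj}z)$ by $g_{nj}(z)$ uses the direct difference formula
\begin{equation*}
g_{nj}(b_{nj}z) - g_{nj}(z) = (1-b_{nj})z\int_{\mathbb{R}_+}\frac{(1-x)^2\,d\mu_{nj}^\circ(x)}{(1-xb_{nj}z)(1-xz)},
\end{equation*}
obtained by a common-denominator calculation; the quadratic factor $(1-x)^2$ supplies enough gain to absorb the shift into a multiplicative $(1+o(1))$, yielding (i). For (ii), write $\log\Sigma_{\mu_{nj}^\circ}(z) = \log B_{\mu_{nj}^\circ}(z\Sigma_{\mu_{nj}^\circ}(z))$; by Proposition 2.1, $\Sigma_{\mu_{nj}^\circ}\to 1$ uniformly on $D_1$, so $z\Sigma_{\mu_{nj}^\circ}(z)/b_{nj} \to z$ uniformly, and (i) applied to this argument combined with the analogous shift analysis then gives (ii).

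The principal technical obstacle is this argument-shift step: since $g_{nj}(z)\to 0$, a shift error of the same overall magnitude as $g_{nj}(z)$ would ruin the multiplicative asymptotic. The essential point is that the shift appears with a quadratic $(1-x)^2$ in the integrand, while $g_{nj}(z)$ itself contains only one factor of $(1-x)$, so the comparability of first and second moments of $1-x$ under the infinitesimal measure $\mu_{nj}^\circ$ — encoded in the sector estimate \eqref{eq:3.2} — forces the relative error to vanish uniformly. This balance is precisely what makes the centering constant $b_{nj}$ effective and forces the passage from $D$ to the strictly smaller sub-disk $D_1$.
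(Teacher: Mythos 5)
Your proof is correct and follows the same overall strategy as the paper: reduce (i) and (ii) to the uniform approximations $B_{\mu_{nj}^\circ}(b_{nj}z)-1 = g_{nj}(z)(1+o(1))$ and $\Sigma_{\mu_{nj}^\circ}(z)-1 = g_{nj}(z)(1+o(1))$, then use $\log(1+\zeta)=\zeta(1+o(1))$. The tactics differ in two places, both to your benefit. For (i), you derive the exact algebraic identity $B_{\mu_{nj}^\circ}(w)-1 = g_{nj}(w)/[1-g_{nj}(w)/(1-w)]$, whereas the paper linearizes $1/\eta$ via a Taylor approximation; the identity is cleaner and makes the $1+o(1)$ factor manifest as the denominator tending to $1$. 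For (ii), you extract it from (i) via the functional equation $\Sigma_\mu(z)=B_\mu(z\Sigma_\mu(z))$ (an immediate consequence of $\eta_\mu^{-1}(z)=z\Sigma_\mu(z)$ and $B_\mu=z/\eta_\mu$), whereas the paper does a separate linearization of $\eta_{\mu_{nj}^\circ}^{-1}$ applied to the identity $z\Sigma(z)-z=\eta^{-1}(z)-\eta^{-1}(\eta(z))$; your route is shorter and reuses (i) as a black box. You are also more explicit than the paper about the argument-shift step $g_{nj}(b_{nj}z)\to g_{nj}(z)$ and $g_{nj}(z\Sigma(z)/b_{nj})\to g_{nj}(z)$, correctly flagging it as a genuine issue — the paper's terse appeal to $B(b_{nj}z)/B(z)\to 1$ does not by itself give a multiplicative $(1+o(1))$ bound relative to $g_{nj}(z)$ — and your common-denominator difference formula is exactly the right tool. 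One small inaccuracy in the commentary: the estimate that makes the shift vanish does not really come from the sector estimate \eqref{eq:3.2}; rather, since $|\Im g_{nj}(z)| = |\Im z|\int (1-x)^2 |1-xz|^{-2}\,d\mu_{nj}^\circ$ is directly comparable (on compacta away from $\mathbb{R}$) to the integral $\int (1-x)^2(1+x^2)^{-1}\,d\mu_{nj}^\circ$ appearing in your shift bound, the trivial inequality $|\Im g_{nj}|\leq |g_{nj}|$ already gives the relative error $O(|1-b_{nj}|)=o(1)$; estimate \eqref{eq:3.2} is needed later in Theorem 3.2 (via the numerical Lemma 2.3 of \cite{BWang2008}), not here. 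This does not affect the correctness of the proof.
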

\begin{proof}
The infinitesimality of $\{\mu_{nj}^{\circ}\}_{n,j}$ shows that
the limit $\lim_{n\rightarrow\infty}\psi_{\mu_{nj}^{\circ}}(z)=z/(1-z)$
holds uniformly in $j$ as well as in $z \in D$. Hence, both $B_{\mu_{nj}^{\circ}}(b_{nj}z)$
and $\Sigma_{\mu_{nj}^{\circ}}(z)$ converge to $1$ as $n\rightarrow\infty$ so that their principal logarithms $\log B_{\mu_{nj}^{\circ}}(b_{nj}z)$
and $\log\Sigma_{\mu_{nj}^{\circ}}(z)$ are defined in $D$ when $n$
is large. Since $\log w=(w-1)(1+o(1))$ as $w\rightarrow1$, the current
lemma will be proved if the functions $B_{\mu_{nj}^{\circ}}(b_{nj}z)-1$
and $\Sigma_{\mu_{nj}^{\circ}}(z)-1$ can be approximated uniformly
by $g_{nj}(z)$ up to a factor of order $1+o(1)$.

We first examine the case of $B_{\mu_{nj}^{\circ}}(b_{nj}z)-1$. We
know from linear Taylor approximation that \begin{eqnarray*}
\frac{1}{z}-\frac{1}{\eta_{\mu_{nj}^{\circ}}(z)} & = & \frac{1+z/(1-z)}{z/(1-z)}-\frac{1+\psi_{\mu_{nj}^{\circ}}(z)}{\psi_{\mu_{nj}^{\circ}}(z)}\\
 & = & -\frac{(1-z)^{2}}{z^{2}}\left[\frac{z}{1-z}-\psi_{\mu_{nj}^{\circ}}(z)\right](1+o(1))\end{eqnarray*}
uniformly in $j$ and $z\in D$ as $n\rightarrow\infty$. Since \[
\frac{(1-z)^{2}}{z}\left[\frac{z}{1-z}-\psi_{\mu_{nj}^{\circ}}(z)\right]=\int_{\mathbb{R}_{+}}\frac{(z-1)(1-x)}{xz-1}\, d\mu_{nj}^{\circ}(x)=g_{nj}(z),\]
we conclude that $B_{\mu_{nj}^{\circ}}(z)-1=g_{nk}(z)(1+o(1))$ as
$n\rightarrow\infty$. The estimate (i) then follows from the fact
that $\lim_{n\rightarrow\infty}B_{\mu_{nj}^{\circ}}(b_{nj}z)/B_{\mu_{nj}^{\circ}}(z)=1$
uniformly in $j$ and $z$.

Let us now turn to the second estimate regarding $\Sigma_{\mu_{nj}^{\circ}}(z)-1$.
To prove (ii), observe first that the functions $\eta_{\mu_{nj}^{\circ}}(z)\rightarrow z$ uniformly in $j$ and in $z\in D$ as $n \rightarrow \infty$. Hence,
denoting $D=\{ z:\left|z-t\right|\leq r\}$ and $D_{1}=\{ z:2\left|z-t\right|\leq r\}$,
we deduce that $\eta_{\mu_{nj}^{\circ}}\left(D_{1}\right)\subset D$ and
\[z\Sigma_{\mu_{nj}^{\circ}}(z)-z=\eta_{\mu_{nj}^{\circ}}^{-1}(z)-\eta_{\mu_{nj}^{\circ}}^{-1}(\eta_{\mu_{nj}^{\circ}}(z)),\qquad z\in D_{1},\] whenever $n$ is sufficiently large.
Finally, the linear approximation of $\eta_{\mu_{nj}^{\circ}}^{-1}(z)$
yields the estimate (ii).
\end{proof}
Fix $\gamma\in\mathbb{R}$ and a finite Borel measure $\sigma$ on
$[0,\infty]$, and let $\mu_{\boxtimes}^{\gamma,\sigma}$ be the $\boxtimes$-infinitely
divisible measure associated with these parameters. It is known that
every weak limit of the free convolutions $\{\nu_{n}\}_{n=1}^{\infty}$
must be $\boxtimes$-infinitely divisible (see {[}3]). Our next result
gives the conditions for this convergence.

\begin{thm}\label{thm:3.2}
The following statements are equivalent:
\begin{enumerate}[$(1)$]
\item The sequence $\nu_{n}$ converges weakly to the law $\mu_{\boxtimes}^{\gamma,\sigma}$.
\item The limit \[
\lim_{n\rightarrow\infty}\frac{1}{c_{n}}\prod_{j=1}^{k_{n}}B_{\mu_{nj}}(z)=\Sigma_{\mu_{\boxtimes}^{\gamma,\sigma}}(z)\]
holds locally uniformly in the domain $\Omega$.
\end{enumerate}
\end{thm}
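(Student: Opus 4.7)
The plan is to take principal logarithms on both sides, use the rescalings $\mu_{nj}\mapsto\mu_{nj}^{\circ}$ together with Lemma 3.1 to express each side as a perturbation of the common sum $\Xi_n(z):=\sum_{j=1}^{k_n}g_{nj}(z)$, and then exploit the asymmetric bound \eqref{eq:3.2} to absorb the remainder. By Proposition 2.2, statement (1) is equivalent to uniform convergence $\Sigma_{\nu_n}\to\exp(u_{\gamma,\sigma})$ on some closed disc $D\subset i\mathbb{C}^+$ with real center. Since $\Sigma_{\delta_{c_n}}\equiv 1/c_n$ and $\Sigma$ is multiplicative under $\boxtimes$, and since the infinitesimality hypothesis forces both $\Sigma_{\mu_{nj}}(z)\to 1$ uniformly on compacta of the relevant disc and $B_{\mu_{nj}}(z)\to 1$ uniformly on compacta of $\Omega$, principal logarithms are well-defined for $n$ large; (1) then reads
\[
S_n(z):=-\log c_n+\sum_{j=1}^{k_n}\log\Sigma_{\mu_{nj}}(z)\ \longrightarrow\ u_{\gamma,\sigma}(z)\quad\text{uniformly on }D,
\]
whereas (2) reads $P_n(z):=-\log c_n+\sum_{j=1}^{k_n}\log B_{\mu_{nj}}(z)\to u_{\gamma,\sigma}(z)$ locally uniformly on $\Omega$.

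Applying the scalings $\Sigma_{\mu_{nj}}(z)=b_{nj}^{-1}\Sigma_{\mu_{nj}^{\circ}}(z)$ and $B_{\mu_{nj}}(z)=b_{nj}^{-1}B_{\mu_{nj}^{\circ}}(b_{nj}z)$, the $\log b_{nj}$'s collapse into one real constant $T_n:=-\log c_n-\sum_j\log b_{nj}$, and Lemma 3.1 on a smaller disc $D_1\subset D$ gives
\[
S_n(z)=T_n+\sum_j g_{nj}(z)(1+\varepsilon_{nj}(z)),\qquad P_n(z)=T_n+\sum_j g_{nj}(z)(1+\varepsilon_{nj}'(z)),
\]
with errors $\varepsilon_{nj},\varepsilon_{nj}'\to 0$ uniformly in $j$ and $z\in D_1$. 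Consequently $P_n(z)-S_n(z)=o(1)\sum_j|g_{nj}(z)|$ on $D_1$, and it suffices to show that $\sum_j|g_{nj}|$ is locally bounded on $D_1$ under whichever hypothesis is in force.

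Assume (1). Since $T_n\in\mathbb{R}$, the imaginary part $\Im S_n(z_0)$ at a fixed $z_0\in D_1\cap\mathbb{C}^+$ converges; and because each $\Im g_{nj}<0$ on $\mathbb{C}^+$, the summands in $\sum_j\Im g_{nj}(z_0)$ all share the same sign, forcing $\sum_j|\Im g_{nj}(z_0)|$ to be bounded in $n$. Now $z\mapsto-\sum_j\Im g_{nj}(z)$ is positive harmonic on $\mathbb{C}^+$, so Harnack's inequality upgrades the scalar bound to local uniform boundedness on $\mathbb{C}^+$; combined with \eqref{eq:3.2}, $\sum_j|g_{nj}|$ is locally uniformly bounded on $D_1\cap\mathbb{C}^+$, and the conjugate symmetry $g_{nj}(\bar z)=\overline{g_{nj}(z)}$ handles $D_1\cap\mathbb{C}^-$. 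Hence $P_n-S_n\to 0$ on $D_1$ and $P_n\to u_{\gamma,\sigma}$ there. To spread this convergence to the whole of $\Omega$, observe that \eqref{eq:2.1} makes each $-\log B_{\mu_{nj}}$ a Pick function on $\mathbb{C}^+$, so $-P_n$ is a Pick function up to a real additive constant; pointwise boundedness in $D_1$ together with normality of Pick functions (each such map lands in a half-plane, hence a disc after Cayley transform) yields a normal family, and Vitali's theorem extends convergence to all of $\mathbb{C}^+$ and thence to $\Omega$ by conjugate symmetry. The implication $(2)\Rightarrow(1)$ is parallel: restrict $P_n\to u_{\gamma,\sigma}$ to $D_1$, run the same boundedness argument, deduce $S_n\to u_{\gamma,\sigma}$ on $D_1$, and invoke Proposition 2.2.

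The principal obstacle is the step of passing from scalar convergence of $\Im S_n(z_0)$ to local boundedness of the complex sum $\Xi_n$. This is where the asymmetric comparison \eqref{eq:3.2} is essential: it converts imaginary-part control into full complex control, after which the Harnack / Pick-function / Vitali machinery takes over and completes the proof.
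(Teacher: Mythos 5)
Your argument is correct and tracks the paper closely in its first half: the rescaling $\mu_{nj} \mapsto \mu_{nj}^{\circ}$, the use of Lemma 3.1 to compare $\log\Sigma_{\mu_{nj}^{\circ}}$ and $\log B_{\mu_{nj}^{\circ}}(b_{nj}\,\cdot\,)$ against the common core $g_{nj}$, and the crucial role of the asymmetric bound \eqref{eq:3.2} are exactly the paper's ingredients. You diverge in two places. First, where the paper invokes Lemma 2.3 of \cite{BWang2008} as a black box to exchange $\sum_j\log\Sigma_{\mu_{nj}^{\circ}}$ for $\sum_j\log B_{\mu_{nj}^{\circ}}(b_{nj}\,\cdot\,)$, you open that box: you use the sign of $\Im g_{nj}$ to deduce boundedness of $\sum_j\lvert\Im g_{nj}\rvert$ from convergence of $\Im S_n$, upgrade to local boundedness via Harnack, and then absorb the $o(1)$ remainder; this is essentially a proof of the cited lemma, so it makes the argument more self-contained without changing its substance. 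Second, to propagate the convergence of $P_n$ from $D_1$ to all of $\Omega$, the paper passes to the Nevanlinna integral forms of $v_n$ and $u_{\gamma,\sigma}$, notes that the kernel $(1+xz)/(z-x)$ is bounded above and below on $D_1\times\mathbb{R}_{+}$, and extracts the weak convergence $\sum_j\sigma_{nj}\Rightarrow\sigma$ together with $\gamma_n-\log c_n\to\gamma+\sigma(\{\infty\})$, from which convergence on $\Omega$ follows by plugging back into the representation. You instead observe that $-P_n$ maps $\mathbb{C}^{+}$ into $\overline{\mathbb{C}^{+}}$, hence forms a normal family after a Cayley transform, and apply Vitali. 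Both routes are valid; the paper's is more concrete and, as a by-product, yields the Gnedenko-type convergence of the L\'evy measures used in the closing remark of Section 3, whereas yours is shorter but gives only the function convergence. One small point worth flagging in your Vitali step: $\Omega=\mathbb{C}\setminus\mathbb{R}_{+}$ contains $(-\infty,0)$, which lies in neither $\mathbb{C}^{+}$ nor $\mathbb{C}^{-}$, so conjugate symmetry alone does not cover compacta meeting the negative axis; you still need local boundedness of $\{P_n\}$ there, e.g.\ via the maximum principle on small discs centered on $(-\infty,0)$, or directly from the Herglotz representation of $P_n$ as in the paper. That is easily supplied, but it should be said rather than left implicit.
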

\begin{proof}
By Proposition 2.2, the weak convergence of $\nu_{n}$ to
the law $\mu_{\boxtimes}^{\gamma,\sigma}$ is equivalent to the existence
of a closed disk $D\subset i\mathbb{C}^{+}$ with real center such
that all $\Sigma_{\mu_{nj}}$ are defined in $D$ and the limit \begin{equation}
\lim_{n\rightarrow\infty}\frac{1}{c_{n}}\prod_{j=1}^{k_{n}}\Sigma_{\mu_{nj}}(z)=\Sigma_{\mu_{\boxtimes}^{\gamma,\sigma}}(z)\label{eq:3.3}\end{equation}
holds uniformly in $D$. We shall seek a way of re-casting \eqref{eq:3.3}
in terms of the functions $g_{nj}$ and $B_{\mu_{nj}}$. In order
to do so we recall Lemma 2.3 from \cite{BWang2008}: Let
$\{ z_{nj}\}_{n,j}$ and $\{ w_{nj}\}_{n,j}$ be two triangular arrays
of complex numbers with $\Im w_{nj}\leq0$, and let $\{ r_{n}\}_{n=1}^{\infty}\subset\mathbb{R}$.
Suppose that $z_{nj}=w_{nj}(1+o(1))$ uniformly in $j$ as $n\rightarrow\infty$
and that there exists a constant $M>0$ such that $\left|\Re w_{nj}\right|\leq M\left|\Im w_{nj}\right|$
for large $n$. Then the sequence $r_{n}+\sum_{j=1}^{k_{n}}z_{nj}$
converges if and only if the sequence $r_{n}+\sum_{j=1}^{k_{n}}w_{nj}$
does. Moreover, the two sequences must have the same limit.

Now, take the smaller disk $D_{1}\subset D$ as in Lemma 3.1 to accommodate the uniform approximation of the logarithms $\log\Sigma_{\mu_{nj}^{\circ}}(z)$ and $\log B_{\mu_{nj}^{\circ}}(b_{nj}z)$ by the functions $g_{nj}(z)$. Then the preceding result on numerical arrays, the estimate \eqref{eq:3.2}, and the equation $\Sigma_{\mu_{nj}^{\circ}}(z)=b_{nj}\Sigma_{\mu_{nj}}(z)$ imply that \eqref{eq:3.3} is equivalent to the uniform convergence\begin{equation}
-\log c_{n}+\sum_{j=1}^{k_{n}}\left[g_{nj}(z)-\log b_{nj}\right]\rightarrow \log\Sigma_{\mu_{\boxtimes}^{\gamma,\sigma}}(z)=u_{\gamma,\sigma}(z)\,\ \ \ \ (n\rightarrow \infty)\label{eq:3.4}\end{equation}
in $D_{1}$, where the function
$u_{\gamma,\sigma}$ extends analytically to $\Omega$ as in the free
L\'{e}vy-Hin\v{c}in's formula \eqref{eq:2.2}.

On the other hand, the formula $B_{\mu_{nj}^{\circ}}(b_{nj}z)=b_{nj}B_{\mu_{nj}}(z)$ implies that \eqref{eq:3.4} is further
equivalent to the uniform convergence
\begin{equation}
v_{n}(z)=-\log c_{n}+\sum_{j=1}^{k_{n}}\log B_{\mu_{nj}}(z)\rightarrow u_{\gamma,\sigma}(z)\qquad(n\rightarrow\infty)\label{eq:3.5}
\end{equation} in $D_{1}$, where $\log B_{\mu_{nj}}$ refers to the Boolean L\'{e}vy-Hin\v{c}in's integral formula \eqref{eq:2.3}. Indeed, by re-writing the functions $v_n$ and $u_{\gamma,\sigma}$ in their integral forms, the convergence \eqref{eq:3.5} becomes
\[-\log c_{n}+\gamma_n+\sum_{j=1}^{k_{n}}\int_{\mathbb{R}_{+}}\frac{1+xz}{z-x}\,d\sigma_{nj}(x)\rightarrow \gamma+\sigma(\{\infty\})+\int_{\mathbb{R}_{+}}\frac{1+xz}{z-x}\,d\sigma(x)\qquad(n\rightarrow\infty).\] Since the integrand $(1+xz)/(z-x)$ remains bounded away from zero and infinity for any $z \in D_1$ and $x \in \mathbb{R}_{+}$, we deduce that \[\sum_{j=1}^{k_{n}}d\sigma_{nj}(x) \Rightarrow d\sigma(x) \quad \text{and} \quad \gamma_n - \log c_n \rightarrow \gamma+\sigma(\{\infty\}) \quad (n \rightarrow \infty).\] This implies that \eqref{eq:3.5} actually holds locally uniformly in the entire domain $\Omega$. After applying the exponential to \eqref{eq:3.5}, we get
the equivalence of (1) and (2).
\end{proof}
When the product of the $B$-transforms $B_{\mu_{nj}}$ ($1\leq j\leq k_{n}$) and $1/c_{n}$ is another $B$-transform of a probability
law $\mu_{n}\in\mathcal{M}_{+}$, we may write
\[\mu_{n}=\delta_{c_{n}}\bmultg\mu_{n1}\bmultg\mu_{n2}\bmultg\cdots\bmultg\mu_{nk_{n}}.\]
Then Theorem 3.2 implies a multiplicative analogue of the Boolean to Free Bercovici-Pata
Bijection. Recall that the L\'{e}vy-Hin\v{c}in measure $\sigma$ appearing in
the integral representation \eqref{eq:2.3} does not assign any mass to $\infty$.
\begin{cor}
Fix a real $\gamma$ and a finite Borel measure $\sigma$ on $[0,\infty)$.
The Boolean convolution  $\mu_{n}$ converges weakly to $\mu_{\bmultg}^{\gamma,\sigma}$ if and only if the free convolution $\nu_{n}$ converges weakly to $\mu_{\boxtimes}^{\gamma,\sigma}$.
\end{cor}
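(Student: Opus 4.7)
The proof is essentially a bookkeeping exercise that chains Theorem \ref{thm:3.2} with Proposition 2.2, once we observe that the two L\'evy--Hin\v{c}in integrals coincide under the hypothesis $\sigma([0,\infty)) = \sigma([0,\infty])$. The plan is as follows.

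First, I would compare the Nevanlinna representations \eqref{eq:2.2} and \eqref{eq:2.3}. Since $\sigma$ is a finite Borel measure on $[0,\infty)$, assigning no mass to $\infty$, the integral over $[0,\infty]$ in \eqref{eq:2.2} reduces to the integral over $[0,\infty)$ in \eqref{eq:2.3}, so that
\[
u_{\gamma,\sigma}(z) = \gamma + \int_{[0,\infty)}\frac{1+xz}{z-x}\,d\sigma(x), \qquad z \in \Omega.
\]
Exponentiating, this yields the key identification
\[
\Sigma_{\mu_{\boxtimes}^{\gamma,\sigma}}(z) = \exp\bigl(u_{\gamma,\sigma}(z)\bigr) = B_{\mu_{\bmultg}^{\gamma,\sigma}}(z), \qquad z \in \Omega.
\]
In particular, the analytic function on the right, which in general is only the $B$-transform of the Boolean infinitely divisible law $\mu_{\bmultg}^{\gamma,\sigma}$, agrees globally with the $\Sigma$-transform of the free infinitely divisible law $\mu_{\boxtimes}^{\gamma,\sigma}$.

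Next I would translate the two weak convergence statements into convergence of $B$-transforms and apply Theorem \ref{thm:3.2}. By definition of the Boolean convolution, whenever $\mu_n = \delta_{c_n}\bmultg\mu_{n1}\bmultg\cdots\bmultg\mu_{nk_n}$ is well-defined in $\mathcal{M}_{+}$, we have
\[
B_{\mu_n}(z) = \frac{1}{c_n}\prod_{j=1}^{k_n} B_{\mu_{nj}}(z), \qquad z \in \Omega.
\]
By Proposition 2.2, the weak convergence $\mu_n \Rightarrow \mu_{\bmultg}^{\gamma,\sigma}$ is equivalent to the local uniform convergence of $B_{\mu_n}$ to $B_{\mu_{\bmultg}^{\gamma,\sigma}}$ in $\Omega$. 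On the other hand, Theorem \ref{thm:3.2} states that the weak convergence $\nu_n \Rightarrow \mu_{\boxtimes}^{\gamma,\sigma}$ is equivalent to the local uniform convergence of the same product $c_n^{-1}\prod_{j} B_{\mu_{nj}}$ to $\Sigma_{\mu_{\boxtimes}^{\gamma,\sigma}}$ in $\Omega$. Combining these two equivalences with the identity $\Sigma_{\mu_{\boxtimes}^{\gamma,\sigma}} = B_{\mu_{\bmultg}^{\gamma,\sigma}}$ established in the first step yields the conclusion.

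The only conceptual point to watch is the implicit assumption that the Boolean convolutions $\mu_n$ actually define probability measures in $\mathcal{M}_{+}$, for otherwise the statement ``$\mu_n \Rightarrow \mu_{\bmultg}^{\gamma,\sigma}$'' has no meaning. This is inherent in writing $\mu_n$ as a Boolean convolution and is the scenario explicitly flagged in the paragraph preceding the corollary, so no further argument is needed. The main ``obstacle,'' such as it is, is really just recognizing that the distinction between $[0,\infty)$ and $[0,\infty]$ in the two Nevanlinna formulas disappears exactly when the L\'evy--Hin\v{c}in measure $\sigma$ places no mass at infinity, which is precisely the regime in which the Boolean bijection is formulated.
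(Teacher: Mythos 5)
Your proof is correct and fills in exactly the argument the paper intends: the paper gives no separate proof of the corollary, merely stating that it follows from Theorem~\ref{thm:3.2}, and the chain you assemble — identify $\Sigma_{\mu_{\boxtimes}^{\gamma,\sigma}}$ with $B_{\mu_{\bmultg}^{\gamma,\sigma}}$ via the coinciding Nevanlinna integrals when $\sigma(\{\infty\})=0$, recognize $B_{\mu_n}=c_n^{-1}\prod_j B_{\mu_{nj}}$, and then apply Proposition~2.2 on one side and Theorem~\ref{thm:3.2} on the other — is precisely the bookkeeping the authors leave to the reader. Your final remark about well-definedness of $\mu_n$ in $\mathcal{M}_{+}$ matches the paper's own framing in the sentence preceding the corollary.
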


We end this section with a remark on the relationship between free and classical limit theorems. By re-writing the function $g_{nj}$ as \[
g_{nj}(z)=\int_{(0,\infty]}\frac{x^2-1}{x^2+1} \,d\mu_{nj}^{\circ}(1/x)+\int_{(0,\infty]}\left[\frac{1+xz}{z-x}\right]\frac{(x-1)^{2}}{x^{2}+1}\, d\mu_{nj}^{\circ}(1/x),\] one can proceed as in \cite{BP2000} to show that the limit \eqref{eq:3.4} yields a different set of necessary and sufficient conditions for the weak convergence $\nu_{n} \Rightarrow \mu_{\boxtimes}^{\gamma,\sigma}$; namely, the convergence $\nu_{n} \Rightarrow \mu_{\boxtimes}^{\gamma,\sigma}$ holds if and only if the sequence of measures \[
d\sigma_{n}(t)=\sum_{j=1}^{k_{n}}\frac{(x-1)^{2}}{x^{2}+1}\, d\mu_{nj}^{\circ}(1/x)\]
converges weakly to the measure $\sigma$ on $[0,\infty]$, and the
sequence of numbers \[
\gamma_{n}=-\log c_{n}+\sum_{j=1}^{k_{n}}\left[\int_{(0,\infty]}\frac{x^{2}-1}{x^{2}+1}\, d\mu_{nj}^{\circ}(1/x)-\log b_{nj}\right]\]
converges to the number $\gamma$ as $n\rightarrow\infty$. Moreover, as seen in \cite{BP2000, CG2008}, under the condition $\sigma(\{0\})=0=\sigma(\{\infty\})$, the above convergence criteria are known to be equivalent to the validity of the weak convergence \[\delta_{c_{n}}\circledast\mu_{n1}\circledast\mu_{n2}\circledast\cdots\circledast\mu_{nk_{n}} \Rightarrow \mu_{\circledast}^{\gamma,\sigma},\] where $\circledast$ denotes the classical multiplicative convolution arising from the product of commuting positive random variables and the law $\mu_{\circledast}^{\gamma,\sigma}$ is the $\circledast$-infinitely divisible law characterized by the classical L\'{e}vy-Hin\v{c}in's formula.

\section{local limit laws and convergence to free Brownian motions}
\subsection{Local limit theorems on $\mathbb{R}_+$}
Let $\nu$ be a non-degenerate $\boxtimes$-infinitely divisible law on $\mathbb{R}_{+}$, and let $\{\mu_{n}\}_{n=1}^{\infty} \subset \mathcal{M}_{+}$ and $\{k_{n}\}_{n=1}^{\infty} \subset \mathbb{N}$ be two sequences such that $k_n\rightarrow\infty$ and the measures $\nu_{n}=(\mu_n)^{\boxtimes k_n}$ converge weakly to $\nu$ as $n \rightarrow \infty$.

Recall from \cite{BB2005} that the $\eta$-transform $\eta_{\nu}$ extends continuously to $\overline{\mathbb{C}^{+}}\setminus \{0\}$ and the extension takes the half-line $(0,\infty)$ to a continuous curve in $\mathbb{C}^{+}\cup \mathbb{R}$. Moreover, $\eta_{\nu}$ is analytic at points $x\in (0,\infty)$ where $\Im{\eta_{\nu}(x)}>0$. The density $f_{\nu}$ of the limit law $\nu$ is continuous except on the finite set $\{x>0:\eta_{\nu}(1/x)=1\} \cup \{0\}$, and the function $f_{\nu}$ extends analytically to all points where it is different from zero. Here, and in the sequel, the extensions of relevant $\eta$-transforms and density functions will be denoted by the same notations if they should exist.

The theorem below shows that the regularity properties of $\nu$ get passed to the attracted measures $\nu_{n}$ in the limit process.

\begin{thm}\label{thm:4.1}
Let $I \subset (0,\infty)$ be a compact interval such that the density $f_{\nu}$ is continuous and $\min f_{\nu}>0$ on $I$. Then we have:
\begin{enumerate}[$(1)$]
\item For sufficiently large $n$, the measure $\nu_n$ becomes Lebesgue absolutely continuous on $I$, and its density function $f_{n}=d\nu_n/dx$ is continuous on $I$.
\item There exists a neighborhood $U$ of the interval $I$ such that the density $f_{n}$ extends analytically to $U$ for large $n$, and these extensions converge locally uniformly in $U$ to the analytic extension of $f_{\nu}$ as $n \rightarrow \infty$.
\end{enumerate}
\end{thm}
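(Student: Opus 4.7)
The plan is to construct analytic extensions of $\eta_{\nu_{n}}$ across each point $1/x_{0}$ ($x_{0}\in I$) via analytic subordination, convert them into extensions of the densities by Stieltjes inversion, and patch the local results into a neighborhood of $I$ by compactness. First, apply the subordination result from Section 2.3 to $\nu_{n}=\mu_{n}^{\boxtimes k_{n}}$: there is a unique conformal map $\omega_{n}:\Omega\rightarrow\Omega$ with $\eta_{\nu_{n}}=\eta_{\mu_{n}}\circ\omega_{n}$, and $H_{n}(z):=z[B_{\mu_{n}}(z)]^{k_{n}-1}$ is an analytic left inverse of $\omega_{n}$ on $\Omega$. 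The array of $k_{n}$ copies of $\mu_{n}$ is infinitesimal: Proposition 2.2 gives $\Sigma_{\mu_{n}}^{k_{n}}\rightarrow\Sigma_{\nu}$ locally uniformly, which together with $k_{n}\rightarrow\infty$ and the non-degeneracy of $\nu$ forces $\Sigma_{\mu_{n}}\rightarrow 1$, hence $\mu_{n}\Rightarrow\delta_{1}$ and $B_{\mu_{n}}\rightarrow 1$ locally uniformly on $\Omega$. Theorem \ref{thm:3.2} with $c_{n}=1$ then yields $[B_{\mu_{n}}]^{k_{n}}\rightarrow\Sigma_{\nu}$ locally uniformly on $\Omega$, and combined with $B_{\mu_{n}}\rightarrow 1$ this gives
\[
H_{n}(z)\longrightarrow\Phi(z):=z\Sigma_{\nu}(z)
\]
locally uniformly on $\Omega$. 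The function $\Phi$ is analytic on the whole of $\Omega$ by $\boxtimes$-infinite divisibility of $\nu$, and satisfies $\Phi(\eta_{\nu}(w))=w$ wherever defined, so $\Phi$ is a globally analytic realization of $\eta_{\nu}^{-1}$.

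Fix $x_{0}\in I$ and let $\zeta_{0}=\eta_{\nu}(1/x_{0})$ be the continuous boundary value from $\mathbb{C}^{+}$. Since $f_{\nu}$ is continuous and positive at $x_{0}$, the regularity results from \cite{BB2005} recalled above give $\zeta_{0}\in\mathbb{C}^{+}$, an analytic continuation of $\eta_{\nu}$ to a neighborhood $W$ of $1/x_{0}$, and $\eta_{\nu}^{\prime}(1/x_{0})\neq 0$. Hence $\Phi$ is analytic at $\zeta_{0}$ with $\Phi(\zeta_{0})=1/x_{0}$ and $\Phi^{\prime}(\zeta_{0})=1/\eta_{\nu}^{\prime}(1/x_{0})\neq 0$. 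Choose $\delta>0$ small enough that $\overline{B(\zeta_{0},\delta)}\subset\mathbb{C}^{+}\cap W$ and $\Phi$ is univalent there. By the uniform convergence $H_{n}\rightarrow\Phi$ on this disk and a standard Rouch\'{e} argument, for large $n$ the map $H_{n}$ is also univalent on $\overline{B(\zeta_{0},\delta)}$, its image contains a fixed open neighborhood $V$ of $1/x_{0}$, and the local inverses $(H_{n}|_{B(\zeta_{0},\delta)})^{-1}:V\rightarrow B(\zeta_{0},\delta)$ converge uniformly on $V$ to $\Phi^{-1}$, which agrees with the analytic continuation of $\eta_{\nu}$. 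A separate argument based on $H_{n}\circ\omega_{n}=\mathrm{id}$, the univalence of $\Phi$, and $\omega_{n}\rightarrow\eta_{\nu}$ on compact subsets of $\mathbb{C}^{+}$ (itself a consequence of $H_{n}\rightarrow\Phi$) shows that, for large $n$, the restriction of $\omega_{n}$ to $V\cap\mathbb{C}^{+}$ takes values in $B(\zeta_{0},\delta)$ and coincides there with this local inverse of $H_{n}$. Therefore $\omega_{n}$ extends analytically across $1/x_{0}$ to all of $V$, with $\omega_{n}\rightarrow\eta_{\nu}$ uniformly on $V$.

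It follows that $\eta_{\nu_{n}}=\eta_{\mu_{n}}\circ\omega_{n}$ extends analytically to $V$, and since $\eta_{\mu_{n}}(z)\rightarrow z$ locally uniformly on $\Omega$ by infinitesimality, $\eta_{\nu_{n}}\rightarrow\eta_{\nu}$ uniformly on $V$. The Stieltjes inversion formula expresses the density of any $\mu\in\mathcal{M}_{+}$ at a point of analyticity as
\[
f_{\mu}(x)=\frac{1}{\pi x}\,\Im\frac{\eta_{\mu}(1/x)}{1-\eta_{\mu}(1/x)},
\]
so applying $w\mapsto 1/w$ to $V$ produces a complex neighborhood of $x_{0}$ on which $f_{\nu_{n}}$ extends analytically and these extensions converge locally uniformly to the analytic extension of $f_{\nu}$; the denominator remains bounded away from zero because $\eta_{\nu}(1/x_{0})\neq 1$ is ensured by continuity and positivity of $f_{\nu}$ at $x_{0}$. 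A finite cover of $I$ by such neighborhoods, with $U$ their union, gives both (1) and (2). The main obstacle I expect is precisely the matching step in the previous paragraph: one must verify that the local right inverse of $H_{n}$ produced by Rouch\'{e} truly coincides with the global subordination function $\omega_{n}$ on $V\cap\mathbb{C}^{+}$, which requires the prior fact that $\omega_{n}(V\cap\mathbb{C}^{+})\subset B(\zeta_{0},\delta)$. Once this matching is in hand, the density formula and the compactness argument are comparatively routine.
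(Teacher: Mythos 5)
Your proposal follows the paper's proof quite closely: same use of the subordination $H_n\circ\omega_n=\mathrm{id}$, the limit $H_n\to z\Sigma_\nu(z)$ obtained from Theorem~\ref{thm:3.2}, local inversion near $\eta_\nu(J)$, and Stieltjes inversion at the end. The one substantive difference is how you identify the local right inverse of $H_n$ with the global subordination function $\omega_n$ — the step you rightly flag as the crux. You do it by first proving $\omega_n\to\eta_\nu$ locally uniformly on $\mathbb{C}^+$ and then using this to locate $\omega_n(V\cap\mathbb{C}^+)$ inside the disc where $H_n$ is univalent; this works, but the justification of $\omega_n\to\eta_\nu$ from $H_n\to\Phi$ needs a normal-family argument plus the observation that $\eta_\nu\circ\Phi=\mathrm{id}$ wherever $\Phi$ maps into $\Omega$, which you leave implicit. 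The paper sidesteps all of this with the cleaner structural fact that $B_{\mu_n}=\Sigma_{\rho_n}$ for a $\boxtimes$-infinitely divisible law $\rho_n$, hence $H_n(z)=z\Sigma_{\tau_n}(z)$ with $\tau_n=\rho_n^{\boxtimes(k_n-1)}$ also $\boxtimes$-infinitely divisible, and $\omega_n=\eta_{\tau_n}$. Then the global inversion identity $\eta_{\tau_n}\bigl(H_n(z)\bigr)=z$ (valid for every $z\in\Omega$ with $H_n(z)\in\Omega$) makes the matching immediate: any analytic right inverse of $H_n$ with values in $\mathbb{C}^+$ must equal $\eta_{\tau_n}=\omega_n$. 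Incorporating this identification would tighten the step you were uneasy about; otherwise your argument is sound and the compactness patching at the end is the same.
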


\begin{proof}

Let $J=\{x:1/x\in I\}$. By the relationship
\[\int_{\mathbb{R}_{+}}\frac{1}{z-x}\, d\nu(x) = \frac{1}{z[1-\eta_{\nu}(1/z)]}, \quad z \in \Omega,\]
and the Stieltjes inversion formula, the density $f_{\nu}$ is the $dx$-a.e. limit of \[-\frac{1}{\pi}\Im \frac{1}{(x+iy)[1-\eta_{\nu}(1/(x+iy))]}\quad (y \rightarrow 0^{+}).\] Thus, our hypothesis on the density
$f_{\nu}$ implies that the image $\eta_{\nu}\left(J\right)$ is a continuous
and bounded curve in $\mathbb{C}^{+}$, and the transform $\eta_{\nu}$ extends analytically to a bounded
neighborhood $V_1$ of the interval $J$ in the right half-plane. By choosing a smaller neighborhood if needed, we may assume that $V_2=\eta_{\nu}\left(V_{1}\right)$ is a neighborhood of $\eta_{\nu}\left(J\right)$ that lies in
$\mathbb{C}^{+}$.

Next, we consider the analytic map $H_{\nu}:\Omega \rightarrow\mathbb{C}$ given by $H_{\nu}(z)=z\Sigma_{\nu}(z)$, where the function $\Sigma_{\nu}$
is defined as in the free L\'{e}vy-Hin\v{c}in's formula (2.2) for the
infinitely divisible law $\nu$. By analytic continuation, we have  \begin{equation}\label{eq:4.1}
       H_{\nu}\left(\eta_{\nu}(z)\right)=z,\quad z\in V_{1}.
    \end{equation} By subordination, for each $n\geq 1$, there exists an analytic function $\omega_{n}:\Omega \rightarrow \Omega$ such that
   \begin{equation}\label{eq:4.2}
       \eta_{\nu_n}(z)=\eta_{\mu_n}\left(\omega_n(z) \right), \quad z\in\Omega.
   \end{equation}Moreover, an inversion relationship holds:
    \begin{equation}\nonumber
       H_n(\omega_n(z))=z,\quad z\in\Omega,
    \end{equation}where the inverse $H_{n}$ is given by the formula \[H_n(z)=z[B_{\mu_n}(z)]^{k_n-1}.\] We note for further reference that the subordination function $\omega_{n}$ can be calculated explicitly as follows. By free and Boolean L\'{e}vy-Hin\v{c}in's formulas, the function $B_{\mu_{n}}$ is the $\Sigma$-transform of a $\boxtimes$-infinitely divisible law $\rho_n$. Note that the function $H_n(z) = z\Sigma_{\tau_{n}}(z)$ where the law $\tau_n = (\rho_n)^{\boxtimes (k_n-1)}$ is also $\boxtimes$-infinitely divisible. Hence, by analytic continuation, we obtain that
     \begin{equation}\label{eq:4.3}
       \eta_{\tau_n}\left(H_{n}(z)\right)=z \end{equation} for any $z\in \Omega$ with $H_{n}(z) \in \Omega$. In particular, the function $\omega_{n}$ must be equal to the $\eta$-transform of the freely infinitely divisible law $\tau_{n}$.

Now, Proposition 2.2 and the weak convergence $\nu_n \Rightarrow \nu$ show that the measures $\mu_n$ converge weakly to the point mass $\delta_{1}$ as $n\rightarrow \infty$, and hence the sequence $\{\mu_n\}_{n=1}^{\infty}$ forms an infinitesimal array with identical rows. Consequently, Theorem 3.2 implies that
       \begin{equation}\nonumber
          H_n(z)=z[B_{\mu_n}(z)]^{k_{n}-1}\rightarrow z\Sigma_{\nu}(z)=H_{\nu}(z)
       \end{equation}
locally uniformly in $V_{2}$ as $n\rightarrow \infty$. By virtue of (\ref{eq:4.1}), this means that \[h_{n}(z)=H_{n}\left(\eta_{\nu}(z)\right) \rightarrow z \quad (n \rightarrow \infty)\] locally uniformly for $z\in V_1$. Then, by the argument principle, each function $h_{n}$ will be invertible when $n$ is sufficiently large. Also, since $h_{n}(z)\approx z$ locally uniformly in $V_1$ for such $n$, the image of $V_1$ under the map $h_n$ cannot deviate too much from $V_1$ itself. Hence, we can find a neighborhood $V_{3}$ such that $J \subset V_3 \subset V_1$ and the map $h_n$ has an analytic inverse $h_n^{-1}:V_{3} \rightarrow V_1$ so that \[H_{n}\left(\left(\eta_{\nu}\circ h_n^{-1}(z)\right)\right)=z,\quad z\in V_3.\] This fact and the inversion relationship (\ref{eq:4.3}) imply that the analytic function $\eta_{\nu}\circ h_n^{-1}$ coincides with the $\eta$-transform of the law $\tau_n$ in $V_3\cap \mathbb{C}^{+}$ for large $n$. Therefore, we can take $\eta_{\nu}\circ h_n^{-1}$ as an analytic continuation for the subordination function $\omega_n$ in the neighborhood $V_3$. Notice that we also have $\eta_{\nu}\circ h_n^{-1}\left(V_3\right) \subset V_2 \subset \mathbb{C}^{+}$ by our construction. In addition, \eqref{eq:4.2} shows that the equation $H_n(\omega_n(z))=z$ can be rewritten as \[\eta_{\nu_n}(z)=z\left[\frac{\omega_{n}(z)}{z}\right]^{k_n/(k_n-1)}, \quad z\in V_3,\] showing that the $\eta$-transform $\eta_{\nu_n}$ also extends analytically to $V_3$ for large $n$. (Here the principal branch of the power function is used.) Meanwhile, the subordination function $\omega_n$ converge to $\eta_{\nu}$ locally uniformly in $V_3$ because $h_{n}^{-1}(z) \rightarrow z$ locally uniformly for $z \in V_3$. Consequently, the $\eta$-transforms $\eta_{\nu_n}$ also converge to the same limit $\eta_{\nu}$ locally uniformly in $V_3$ by the subordination formula \eqref{eq:4.2} and the weak convergence $\mu_n \Rightarrow \delta_1$.

Finally, the desired neighborhood $U$ is given by
\[U=\{1/z:\, z\in V_3 \},\] and the results concerning the absolute continuity of the laws $\nu_n$ and the convergence properties of their densities follow at once from the method of Stieltjes inversion.
\end{proof}

We remark that the neighborhood $U$ in the preceding theorem can be chosen to be symmetric with respect to the half-line $(0,\infty)$, and then the analytic extension of the density $f_{n}$ can be taken as: \[f_{n}(z) = \frac{1}{2\pi i}\left[\frac{1}{z[1-\overline{\eta_{\nu_{n}}(1/\overline{z})}]} - \frac{1}{z[1-\eta_{\nu_{n}}(1/z)]}\right], \quad z\in U.\]

Recall that the free multiplicative Brownian motion on $\mathbb{R}_{+}$ is a multiplicative free L\'{e}vy process whose marginal laws $\{\chi_t\}_{t>0}$ are characterized as follows: \[\Sigma_{\chi_t}(z)= \exp\left(\frac{t+tz}{2z-2}\right), \quad z\in \Omega.\]

The following result from \cite{BianeJFA, Zhong1} gives a detailed description of the law $\chi_t$, showing that each $\chi_t$ belongs to the class $\mathcal{M}_{0}$ in the original definition of superconvergence \cite{BV1995sup}. For any fixed $t>0$, the law $\chi_t$ is Lebesgue absolutely continuous with a continuous density $f_t$ supported on the interval $[x_1,x_2] \subset (0,\infty)$, where \[ x_1=x_1(t)=\left[1+t/2-\sqrt{t(t+4)}/2\right]\exp\left(-\sqrt{t(t+4)}/2\right)\] and $x_2=x_2(t)=1/x_1(t)$. We also have that $f_t(x_1)=0=f_t(x_2)$ and for any $x\in (x_1,x_2)$, the value $f_t(x)$ is given by $\Im z / (2\pi x)$, where $z$ is the unique solution with a positive imaginary part of the equation \[z\exp(t(z-1/2))=x(z-1).\]

Thus, Theorem 4.1 yields a multiplicative analogue of the superconvergence phenomenon:

\begin{cor}
Let $\varepsilon>0$ be a given small number, and suppose that the sequence $\nu_n = (\mu_n)^{\boxtimes k_n}$ converges to $\chi_t$ weakly
as $n \rightarrow \infty$.
Then the density $f_n=d\nu_n/dx$ is analytic and strictly positive in
$[x_1(t)+\varepsilon,x_2(t)-\varepsilon]$
for sufficiently large $n$, and the $k$-th derivative $f_{n}^{(k)}$ converges to the $k$-th derivative $f_t^{(k)}$ uniformly on
$[x_1+\varepsilon,x_2-\varepsilon]$ for any integer $k\geq0$.
\end{cor}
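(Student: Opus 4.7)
The plan is to apply Theorem \ref{thm:4.1} with $\nu=\chi_t$ and then upgrade the locally uniform convergence of the density extensions in a complex neighborhood to uniform convergence of all derivatives on the real interval.

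First I would verify the hypothesis of Theorem \ref{thm:4.1} for the interval $I=[x_1(t)+\varepsilon,x_2(t)-\varepsilon]$. From the description of $\chi_t$ recalled just before the corollary, the density $f_t$ is continuous on the full support $[x_1,x_2]$, vanishes only at the endpoints $x_1,x_2$, and is given by $\Im z/(2\pi x)$ with $\Im z>0$ at any interior point $x\in(x_1,x_2)$. Since $I$ is a compact subset of $(x_1,x_2)$, continuity and compactness yield $\min_{x\in I} f_t(x)>0$, so the hypotheses of Theorem \ref{thm:4.1} are satisfied.

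Theorem \ref{thm:4.1} then produces a complex neighborhood $U\subset\mathbb{C}$ of $I$ such that, for all sufficiently large $n$, the density $f_n=d\nu_n/dx$ extends to an analytic function on $U$, and these extensions converge locally uniformly in $U$ to the analytic extension of $f_t$. Strict positivity of $f_n$ on $I$ is immediate: the uniform convergence $f_n\to f_t$ on the compact set $I$ together with $\min_I f_t>0$ forces $f_n(x)\geq \tfrac{1}{2}\min_I f_t>0$ on $I$ once $n$ is large.

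For the convergence of derivatives, I would invoke the Weierstrass theorem on locally uniform limits of analytic functions. Pick any compact set $K\subset U$ whose interior contains $I$ (for instance, a closed $\delta$-neighborhood of $I$ chosen so that $K\subset U$). Local uniform convergence of $f_n\to f_t$ on $U$ gives uniform convergence on $K$, and hence, by Cauchy's integral formula for derivatives applied along a contour in $U\setminus K$ surrounding $K$, we get $f_n^{(k)}\to f_t^{(k)}$ uniformly on $K$ for every $k\geq 0$. Restricting to $I\subset K$ yields the claimed uniform convergence on $[x_1+\varepsilon,x_2-\varepsilon]$. No genuine obstacle arises here; the work is entirely carried by Theorem \ref{thm:4.1}, and the only mild point is the harmless bookkeeping that the neighborhood $U$ can be shrunk to a compact neighborhood of $I$ on which Cauchy's derivative estimates apply uniformly.
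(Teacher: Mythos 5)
Your proposal is correct and follows the same route the paper intends: the corollary is stated as a direct consequence of Theorem \ref{thm:4.1}, and you have simply supplied the (routine) verification that $f_t$ is continuous and bounded away from zero on $I=[x_1+\varepsilon,x_2-\varepsilon]$, together with the standard Cauchy/Weierstrass argument that locally uniform convergence of analytic extensions yields uniform convergence of all derivatives on $I$.
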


\subsection{Local limit theorems on $\mathbb{T}$}
Let $\{\mu_{n}\}_{n=1}^{\infty}$ be measures in $\mathcal{M}_{\mathbb{T}}$ such that the sequence $\nu_n=(\mu_{n})^{\boxtimes k_n}$ tends weakly to a non-degenerate law $\nu \in \mathcal{M}_{\mathbb{T}}$ as $n\rightarrow \infty$,
where $k_n \geq 2$ are integers such that $k_n\rightarrow \infty$. As in the positive half-line case, it follows that $\mu_n \Rightarrow \delta_{1}$, and hence the sequence $\{\mu_{n}\}_{n=1}^{\infty}$ is infinitesimal.

The limit law $\nu$ is again $\boxtimes$-infinitely divisible \cite{BB2008a}, and all the regularity properties mentioned in Section 4.1 also hold for the measure $\nu$ in this case \cite{BB2005}. Thus, the law $\nu$ has at most one atom and has no singular continuous component relative to the arc-length measure $d\theta$ on $\mathbb{T}$. The $\eta$-transform $\eta_{\nu}$ extends continuously to $\mathbb{T}$, and it is analytic at points $\xi \in \mathbb{T}$ such that $\left|\eta_{\nu}(\xi)\right|<1$. The density $f_\nu= d\nu/d\theta$ is continuous everywhere except at the point $\xi \in \mathbb{T}$ where $\eta_{\nu}(\xi)=1$, and $f_\nu$ is analytic at all points where it is positive.

The analogue of Theorem 4.1 in the current setting is the following
\begin{thm}\label{thm:4.3}
Let $I \subseteq \mathbb{T}$ be a connected arc on which the density $f_{\nu}$ is continuous and bounded away from zero. Then there exists a neighborhood $U$ of the arc $I$ such that for sufficiently large $n$, the measure $\nu_n$ is absolutely continuous on $I$ with respect to $d\theta$ and its density $f_n$ extends analytically to $U$. Moreover, the analytic extension of $f_n$ converges locally uniformly in $U$ to that of the limiting density $f_{\nu}$.
\end{thm}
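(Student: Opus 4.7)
The plan is to adapt the proof of Theorem~\ref{thm:4.1} to the disc setting, replacing the half-plane geometry by the geometry of $\mathbb{D}$. Since $f_{\nu}$ is continuous and bounded below by a positive constant on $I$, the regularity results recalled just above guarantee that $|\eta_\nu(\xi)|<1$ for every $\xi\in I$ and that $\eta_\nu$ extends analytically to an open neighborhood $V_1\subset \mathbb{C}$ of $I$ with image $V_2:=\eta_\nu(V_1)\subset \mathbb{D}$. The meromorphic map $H_\nu(z)=z\Sigma_\nu(z)$ furnished by the L\'{e}vy-Hin\v{c}in formula~\eqref{eq:2.4} serves as a local inverse of $\eta_\nu$ by Proposition 2.3, so $H_\nu\circ \eta_\nu=\operatorname{id}$ holds throughout $V_1$ by analytic continuation from a neighborhood of the origin.

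For each large $n$, since $\mu_n\Rightarrow \delta_1$ (a consequence of $\nu_n\Rightarrow \nu$ and the disc version of Proposition 2.2), the function $\eta_{\mu_n}$ eventually has no zeros in $\mathbb{D}\setminus\{0\}$, so the discussion at the end of Section 2.2 produces a $\boxtimes$-infinitely divisible law $\rho_n$ on $\mathbb{T}$ with $\Sigma_{\rho_n}=B_{\mu_n}$; setting $\tau_n=\rho_n^{\boxtimes(k_n-1)}$ gives $H_n(z):=z[B_{\mu_n}(z)]^{k_n-1}=z\Sigma_{\tau_n}(z)$, which is meromorphic on $\mathbb{D}$, and Proposition 2.3 supplies a unique $\omega_n=\eta_{\tau_n}:\mathbb{D}\to\mathbb{D}$ with $H_n\circ \omega_n=\operatorname{id}$; this $\omega_n$ coincides with the standard subordination function in $\eta_{\nu_n}=\eta_{\mu_n}\circ \omega_n$. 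The crucial convergence $H_n\to H_\nu$ locally uniformly on $\mathbb{D}$ is then obtained from the identity $B_{\mu_n}(z)=\Sigma_{\mu_n}(\eta_{\mu_n}(z))$ together with $\Sigma_{\mu_n}^{k_n}=\Sigma_{\nu_n}$, which yield
\[
[B_{\mu_n}(z)]^{k_n-1}=[\Sigma_{\nu_n}(\eta_{\mu_n}(z))]^{(k_n-1)/k_n}.
\]
Since $\eta_{\mu_n}(z)\to z$, $(k_n-1)/k_n\to 1$, and $\Sigma_{\nu_n}\to \Sigma_\nu$ uniformly near the origin, this gives $\Sigma_{\tau_n}\to \Sigma_\nu$ near zero; the L\'{e}vy-Hin\v{c}in representation~\eqref{eq:2.4} of these $\Sigma$-transforms of $\boxtimes$-infinitely divisible laws then upgrades the convergence to local uniform convergence on all of $\mathbb{D}$.

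With $H_n\to H_\nu$ in hand, the argument from Theorem~\ref{thm:4.1} transplants directly. Set $h_n(z)=H_n(\eta_\nu(z))$ on $V_1$; then $h_n\to \operatorname{id}$ locally uniformly on $V_1$, and the argument principle supplies a smaller open neighborhood $V_3$ of $I$ with $\overline{V_3}\subset V_1$ on which $h_n$ is univalent and admits an analytic inverse $h_n^{-1}$, for all large $n$. The composite $\eta_\nu\circ h_n^{-1}$ is then a local inverse of $H_n$ on $V_3$; since $H_\nu$ is injective on a neighborhood of $\eta_\nu(I)$ (being inverted by $\eta_\nu$) and $H_n\to H_\nu$ locally uniformly, so is $H_n$ for large $n$. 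Uniqueness of local inverses therefore identifies $\eta_\nu\circ h_n^{-1}$ with the analytic continuation of $\omega_n$ to $V_3$, and the subordination formula extends $\eta_{\nu_n}=\eta_{\mu_n}\circ \omega_n$ analytically to $V_3$; local uniform convergence $\eta_{\nu_n}\to \eta_\nu$ on $V_3$ follows from $\eta_{\mu_n}(w)\to w$ and $\omega_n\to \eta_\nu$.

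The passage from $\eta$-transforms to densities uses the Herglotz-Poisson formula $f_\nu(\xi)=\frac{1}{2\pi}\Re[(1+\eta_\nu(\xi))/(1-\eta_\nu(\xi))]$ on $\mathbb{T}$: since $\eta_\nu$ avoids the value $1$ on the extended neighborhood $V_3$, reflection across $\mathbb{T}$ produces an analytic extension of $f_\nu$ to a symmetric neighborhood $U\subseteq V_3$ of $I$, and the same recipe extends each $f_n$ analytically to $U$, with local uniform convergence $f_n\to f_\nu$ on $U$ immediate from the convergence of the $\eta$-transforms. The principal obstacle in this plan is the convergence $H_n\to H_\nu$, which is the identical-row case of a circle analog of Theorem~\ref{thm:3.2}; the short route above, based on the $B$-$\Sigma$ identity and the infinite-divisibility representation~\eqref{eq:2.4}, bypasses the need for a full circle counterpart of Lemma 3.1.
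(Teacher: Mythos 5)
The gap is in the very first step, and it is exactly the obstacle the paper's proof is designed to circumvent. You claim that $\mu_n \Rightarrow \delta_1$ forces $\eta_{\mu_n}$ to be eventually zero-free on $\mathbb{D}\setminus\{0\}$, so that $B_{\mu_n}=z/\eta_{\mu_n}$ is analytic and can serve as the $\Sigma$-transform of a $\boxtimes$-infinitely divisible law $\rho_n$. This implication is false. Take $\mu_n=(1-1/n)\delta_1+(1/n)\delta_{-1}$, which converges weakly to $\delta_1$; a short computation gives
\[
\psi_{\mu_n}(z)=\frac{z\bigl(z+1-2/n\bigr)}{1-z^2},
\]
so $\eta_{\mu_n}$ vanishes at $z=2/n-1\in\mathbb{D}\setminus\{0\}$ for every $n\geq 3$. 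Consequently $B_{\mu_n}$ has a pole there, it cannot be written as $\exp(u)$ with $\Re u\geq 0$, and no $\boxtimes$-infinitely divisible $\rho_n$ with $\Sigma_{\rho_n}=B_{\mu_n}$ exists. Your $\tau_n=\rho_n^{\boxtimes(k_n-1)}$ is therefore undefined, and with it the analytic left inverse $H_n=z\Sigma_{\tau_n}$, the inversion formula $\eta_{\tau_n}\circ H_n=\operatorname{id}$, and the subsequent analytic continuation of $\omega_n$ to a neighborhood of $\eta_\nu(I)$. This is precisely the point where the circle case diverges from the half-line case: on $\mathbb{R}_{+}$, \emph{every} measure in $\mathcal{M}_{+}$ is $\bmultk$-infinitely divisible and $B_{\mu_n}$ is automatically of the required form, which is why the transplant of Theorem~\ref{thm:4.1} you propose works verbatim there but not here.

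The paper's remedy is to bypass $B_{\mu_n}$ entirely. It introduces $\rho_n=\mu_n\boxtimes\mu_n$, uses the fact (Proposition 3.3(1) of \cite{BB2005}) that any nontrivial free convolution of two measures on $\mathbb{T}$ is automatically $\bmultk$-infinitely divisible --- so $\eta_{\rho_n}$ is zero-free in $\mathbb{D}\setminus\{0\}$ \emph{without any infinitesimality hypothesis} and $B_{\rho_n}=\exp(2u_n)$ has a well-defined square root --- and then sets $\Phi_n(z)=z[B_{\rho_n}(z)]^{(k_n-2)/2}$. Proposition~2.3 gives a global inverse $\omega_n$, one verifies $\eta_{\nu_n}=\eta_{\rho_n}\circ\omega_n$ by matching $\Sigma$-transforms, and the Herglotz form of $u_n$ makes $\Phi_n(z)=z\Sigma_{\tau_n}(z)$ for a genuinely $\boxtimes$-infinitely divisible $\tau_n$, restoring the inversion formula that drives the rest of the argument. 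If you adopt this construction, your method for proving $\Phi_n\to z\Sigma_\nu(z)$ via the $B$--$\Sigma$ identity near $0$ together with the Herglotz/L\'{e}vy--Hin\v{c}in normality argument is a plausible alternative to the paper's appeal to Theorem~3.5 of \cite{Wang}; but as written your proof does not get off the ground.
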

\begin{proof}
We proceed as in the proof of Theorem \ref{thm:4.1}. Setting $J=\{1/\xi: \xi \in I\}$ as before, the formula
   \begin{equation}\label{eq:4.4}
     \frac{1}{2\pi}\frac{1-|\et{\nu}(z)|^2}{|1-\et{\nu}(z)|^2} = \frac{1}{2\pi}\int_{\mathbb{T}}\Re{\left[\frac{\xi + z}{\xi - z}\right]}\, d\nu\left(1/\xi\right), \quad z\in \mathbb{D},
   \end{equation}
and the general theory of Poisson integrals imply that the density of the law $d\nu\left(1/\xi\right)$ is determined by the boundary radial limits of \eqref{eq:4.4}. Since the change of variable $\xi \mapsto 1/\xi$ preserves the arc-length measure $d\theta$, we deduce from the assumptions on $f_{\nu}$ that the image of $J$ under the map $\eta_{\nu}$ is a continuous curve which lies in $\mathbb{D}$ and is bounded away from the circle $\mathbb{T}$. In addition, the map $\eta_{\nu}$ admits an analytic continuation to a neighborhood $V_1$ of the arc $J$ such that the domain $V_2=\eta_{\nu}\left(V_1\right)$ is contained in $\mathbb{D}$.

In this case, it is still true that the $\eta$-transform of the free convolution $\nu_{n}$ will be subordinated to the function
$\eta_{\mu_n}$; however, the underlying probability measure associated with this subordination function may not be $\boxtimes$-infinitely divisible. In what follows, we will use the global inversion result to find a slightly different subordination property which will give us the infinite divisibility. This method was first used in \cite{BB2005} for constructing fractional free convolution powers, where the use of infinite divisibility is implicit in that construction. We begin by introducing the auxiliary free convolution \[\rho_n=\mu_n\boxtimes \mu_n, \quad n\geq 1.\] Then, by Proposition 3.3 (1) in \cite{BB2005}, every such a measure $\rho_n$ will be $\bmultg$-infinitely divisible and hence its $B$-transform has a square root \[[B_{\rho_n}(z)]^{1/2}=\exp\left(u_n(z)\right), \quad z\in \mathbb{D},\] where $\Re u_n(z) \geq 0$ for $z \in \mathbb{D}$ and $[B_{\rho_n}(0)]^{1/2}=1/\text{m}(\mu_n)$. In particular, this property allows us to define the following analytic function \begin{equation}\label{eq:4.5}\Phi_{n}(z)=z[B_{\rho_{n}}(z)]^{(k_n-2)/2}=z\exp\left((k_n-2)u_n(z)\right), \quad z \in \mathbb{D} ,\end{equation} where the $(k_n-2)/2$ power is chosen to be $1/\left[\text{m}(\mu_n)\right]^{k_n-2}$ for $z=0$. Clearly, we also have \[\Phi_{n}(0)=0 \quad \text{and} \quad \left|\Phi_{n}(z)\right| \geq \left|z\right|\] for all $z \in \mathbb{D}$.

Now, by Proposition 2.3, the function $\Phi_n$ has a unique global inverse $\omega_n:\overline{\mathbb{D}} \rightarrow \overline{\mathbb{D}}$ such that the inversion relationship \[\Phi_{n} \left(\omega_{n}(z)\right)=z\] holds in the open disk $\mathbb{D}$. Since $\left|\omega_{n}(z)\right| \leq \left|z\right|$ for $z\in \mathbb{D}$, the composition $\eta_{\rho_n} \circ \omega_n$ in $\mathbb{D}$ is the $\eta$-transform of a law $\beta_n \in \mathcal{M}_{\mathbb{T}}$. We shall verify that this measure $\beta_n$ is exactly the free convolution $\nu_n=(\mu_{n})^{\boxtimes k_n}$, and hence the desired analytic subordination \begin{equation}\label{eq:4.6}\eta_{\nu_n}(z)=\eta_{\rho_n}\left(\omega_n(z)\right), \quad z\in \mathbb{D},\end{equation} will be established. To this end, note that \begin{equation}\nonumber
        \begin{split}
         \Sigma_{\beta_n}(z)&=\frac{1}{z}\,\eta_{\beta_n}^{-1}(z)=\frac{1}{z}(\omega_{n}^{-1}\circ \eta_{\rho_n}^{-1})(z)=\frac{1}{z}(\Phi_{n} \circ \eta_{\rho_n}^{-1})(z)\\
                      &=\frac{1}{z}\,\eta_{\rho_n}^{-1}(z)[(B_{\rho_{n}}\circ \eta_{\rho_n}^{-1})(z)]^{(k_n-2)/2}\\ &=\Sigma_{\rho_n}(z)[\Sigma_{\rho_n}(z)]^{(k_n-2)/2}=[\Sigma_{\mu_n}(z)]^{k_n}=\Sigma_{\nu_n}(z)
                             \end{split}
      \end{equation} for $z$ near the origin. Since the $\Sigma$-transform of a measure determines the underlying measure uniquely, we obtain $\beta_n=\nu_n$ as claimed.

Furthermore, the Herglotz integral formula of the function $u_{n}$ in \eqref{eq:4.5} and the free L\'{e}vy-Hin\v{c}in's formula \eqref{eq:2.4} imply that $\Phi_{n}(z)=z\Sigma_{\tau_n}(z)$, where the measure $\tau_{n}$ is a $\boxtimes$-infinitely divisible law in $\mathcal{M}_{\mathbb{T}}$. Therefore the subordination function $\omega_n$ is in fact the $\eta$-transform of the law $\tau_n$, and we obtain yet another inversion formula which is the analogue of (\ref{eq:4.3}) in the current setting: \[\eta_{\tau_n}\left(\Phi_{n}(z)\right)=z\] for any $z\in \mathbb{D}$ such that $\Phi_{n}(z) \in \mathbb{D}$.

With these subordination and inversion formulas constructed, the rest of the proof now follows the same lines as in the proof of Theorem 4.1. We conclude that for large $n$, the functions $\omega_n$ and $\eta_{\nu_n}$ both extend analytically to a neighborhood $V_3$ of the arc $J$, and their extensions tend to the same limit $\eta_{\nu}$ locally uniformly in $V_3$. The proof is then finished by taking the boundary values of the integral formula \eqref{eq:4.4}. We omit this rather similar argument and only mention that the local uniform convergence of $\{\Phi_n\}_{n=1}^{\infty}$ to the function $z\Sigma_{\nu}(z)$ in the neighborhood $V_2$ is guaranteed by Theorem 3.5 from \cite{Wang}, which is the Gnedenko type convergence criterion for the weak convergence $\nu_n \Rightarrow \nu$.
\end{proof}

We next consider the special case where the limit law is the Haar measure on $\mathbb{T}$. In this case, we define the free convolution $\rho_{n}=\mu_{n} \boxtimes \mu_{n}$ as before and recall the corresponding subordination function
$\omega_n(z)=\eta_{\tau_n}(z)$ and its left inverse $\Phi_{n}=z\Sigma_{\tau_n}(z)$ for $z\in \mathbb{D}$. Here the subordination distribution $\tau_n$ is both $\boxtimes$- and $\bmultg$-infinitely divisible, see Section 2.2.

By Proposition 2.3, $\omega_{n}$ is an injective and continuous function satisfying \[\omega_{n}\left(\mathbb{D}\right)=\{z\in \mathbb{D}:\left|\Phi_{n}(z)\right|<1\}\] and the boundary $\partial\omega_{n}\left(\mathbb{D}\right)=\omega_{n}\left(\mathbb{T}\right)$. Moreover, we have \[\omega_{n}\left(\mathbb{T}\right) \cap \mathbb{D} = \{z\in \mathbb{D}:\left|\Phi_{n}(z)\right|=1\},\] and if $\xi\in \omega_{n}\left(\overline{\mathbb{D}}\right) \cap \mathbb{T}$ then the entire radius $\{r\xi:0 \leq r <1\}$ is contained in $\omega_{n}\left(\mathbb{D}\right)$. Finally, if $\xi \in \mathbb{T}$ is such that $\omega_{n}(\xi) \in \mathbb{D}$, then $\omega_{n}$ has an analytic continuation at the point $\xi$.

We now prove the superconvergence to Haar measure $d\theta/2\pi$. Notice that the infinitesimality and weak convergence assumptions are not required in the following result.
\begin{thm}\label{thm:4.4}
Let $\{\mu_n\}_{n=1}^{\infty}$ be a sequence of measures in $\MT$ such that \[\lim_{n \rightarrow \infty}\left[\text{m}(\mu_n)\right]^{k_n-2}=0.\] Set $\nu_n=(\mu_n)^{\boxtimes k_n}$. Then for sufficiently large $n$, we have:
\begin{enumerate}[$(1)$]
\item Each measure $\nu_{n}$ is absolutely continuous with the support $\mathbb{T}$.
\item The density function $f_{n}=d\nu_{n}/d\theta$ is continuous on $\mathbb{T}$, and $f_n$ extends analytically to every point on $\mathbb{T}$.
\item The densities $f_n$ converge to $1/2\pi$ uniformly on $\mathbb{T}$ as $n \rightarrow \infty$.
\end{enumerate}
\end{thm}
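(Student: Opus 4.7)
The plan is to recycle the subordination machinery from the proof of Theorem~\ref{thm:4.3} and to exploit the hypothesis $[\text{m}(\mu_n)]^{k_n-2}\to 0$ through Harnack's inequality. With $\rho_n=\mu_n\boxtimes\mu_n$ and $[B_{\rho_n}]^{1/2}=\exp(u_n)$ satisfying $\Re u_n\ge 0$ on $\mathbb{D}$, the map $\Phi_n(z)=z\exp((k_n-2)u_n(z))=z\Sigma_{\tau_n}(z)$ has the global inverse $\omega_n=\eta_{\tau_n}:\overline{\mathbb{D}}\to\overline{\mathbb{D}}$ supplied by Proposition~2.3, and the subordination $\eta_{\nu_n}=\eta_{\rho_n}\circ\omega_n$ holds on $\mathbb{D}$. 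Evaluating at zero gives $\exp(u_n(0))=1/\text{m}(\mu_n)$, so
\[
M_n \;:=\; (k_n-2)\,\Re u_n(0) \;=\; -(k_n-2)\log|\text{m}(\mu_n)| \;=\; \log\!\bigl(1/|\text{m}(\mu_n)|^{k_n-2}\bigr)\;\longrightarrow\;+\infty.
\]

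Harnack's inequality for the non-negative harmonic function $\Re u_n$ then produces $(k_n-2)\Re u_n(z)\ge M_n(1-|z|)/(1+|z|)$ on $\mathbb{D}$, whence $|\Phi_n(z)|\ge|z|\exp(M_n(1-|z|)/(1+|z|))$, which exceeds $1$ as soon as $\phi(|z|)<M_n$ with $\phi(r):=\tfrac{1+r}{1-r}\log(1/r)$. A direct calculation shows $\phi$ is strictly decreasing on $(0,1)$ with $\phi(0^+)=+\infty$ and $\phi(1^-)=2$, so once $M_n>2$ the equation $\phi(r)=M_n$ has a unique root $r_n\in(0,1)$, and $r_n\to 0$ as $n\to\infty$. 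Therefore $|\Phi_n(z)|>1$ throughout the annulus $\{r_n<|z|<1\}$, and by Proposition~2.3(2) this yields
\[
\omega_n(\mathbb{D}) \;=\; \{w\in\mathbb{D}:|\Phi_n(w)|<1\}\;\subset\;\{|w|<r_n\},
\]
so in particular $\omega_n(\mathbb{T})\subset\{|w|\le r_n\}\subset\mathbb{D}$ for every sufficiently large $n$.

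Proposition~2.3(5) then extends $\omega_n$ analytically across every point of $\mathbb{T}$, hence to a neighborhood of $\overline{\mathbb{D}}$, and postcomposition with $\eta_{\rho_n}$ extends $\eta_{\nu_n}$ analytically to the same neighborhood. The Schwarz-type bound $|\eta_{\rho_n}(w)|\le|w|$ further gives $|\eta_{\nu_n}|\le r_n\to 0$ uniformly on $\overline{\mathbb{D}}$. The Poisson identity \eqref{eq:4.4} then identifies the continuous positive function $\xi\mapsto\tfrac{1}{2\pi}(1-|\eta_{\nu_n}(\xi)|^2)/|1-\eta_{\nu_n}(\xi)|^2$ on $\mathbb{T}$ as the boundary value of the Poisson integral of the inversion push-forward of $\nu_n$, whence $\nu_n$ is absolutely continuous with density
\[
f_n(\zeta) \;=\; \frac{1}{2\pi}\cdot\frac{1-|\eta_{\nu_n}(1/\zeta)|^2}{|1-\eta_{\nu_n}(1/\zeta)|^2},\qquad \zeta\in\mathbb{T}.
\]

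The analytic continuation of $f_n$ to an annular neighborhood of $\mathbb{T}$ is obtained by the standard Schwarz-reflection device of combining $\eta_{\nu_n}$ with its companion $z\mapsto\overline{\eta_{\nu_n}(1/\bar z)}$, which together are analytic on both sides of $\mathbb{T}$ for large $n$. The uniform convergence $\eta_{\nu_n}\to 0$ on $\mathbb{T}$ then immediately yields $f_n\to 1/(2\pi)$ uniformly, establishing parts (1)--(3). The sole technical hurdle is securing the uniform smallness of $|\omega_n|$ all the way up to $\mathbb{T}$; Harnack does this cleanly because the hypothesis is tailored precisely to make $(k_n-2)\Re u_n(0)$ blow up, and the Poisson-kernel lower bound $(1-|z|)/(1+|z|)$ then transmits that blowup across the whole disk.
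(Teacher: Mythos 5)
Your proposal is correct and follows the same underlying approach as the paper; the key quantitative estimate is identical. The paper writes $\Phi_n$ in its free L\'{e}vy--Hin\v{c}in form with measure $\sigma_n$ and uses the Poisson-kernel lower bound $\Re\bigl[\frac{1+\xi z}{1-\xi z}\bigr]\geq\frac{1-|z|}{1+|z|}$ to get $|\Phi_n(z)|\geq|z|\exp\bigl(\sigma_n(\mathbb{T})\frac{1-|z|}{1+|z|}\bigr)$, while you arrive at the same inequality by applying Harnack's inequality to the non-negative harmonic function $(k_n-2)\Re u_n$ with $M_n=(k_n-2)\Re u_n(0)$; since $(k_n-2)u_n(0)=\log\alpha_n+\sigma_n(\mathbb{T})$ and $|\alpha_n|=1$, your $M_n$ is exactly the paper's $\sigma_n(\mathbb{T})$, and your decreasing function $\phi$ is exactly $-g$ for the paper's auxiliary $g$. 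Your one streamlining is that the single shrinking-disc estimate $\omega_n(\overline{\mathbb{D}})\subset\{|w|\leq r_n\}$ directly yields both $\omega_n(\overline{\mathbb{D}})\subset\mathbb{D}$ (which the paper establishes separately by a contradiction argument at $|z_n|=1/2$) and the uniform convergence $\eta_{\nu_n}\to 0$; this is a tidier presentation of the same idea rather than a genuinely different route.
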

\begin{proof}
We first write each function $\Phi_{n}$ in its free L\'{e}vy-Hin\v{c}in's representation
\begin{equation}\nonumber
              \Phi_n(z)=\alpha_{n}\,z\,\exp\left(\int_{\mathbb{T}}\frac{1+\xi z}{1-\xi z}\,d\sigma_n(\xi) \right), \quad z\in \mathbb{D},
            \end{equation} and observe from it that the derivative \[\Phi_n^{\prime}(0)=\lim_{z \rightarrow 0}\Phi_{n}(z)/z=\alpha_ne^{\sigma_n\left(\mathbb{T}\right)}.\] On the other hand, the formula (\ref{eq:4.5}) shows that the same derivative can also be calculated as follows: \[ \Phi_n^{\prime}(0)=\left[B_{\rho_{n}}(0)\right]^{(k_n-2)/2}=1/\left[\text{m}(\mu_n)\right]^{k_n-2}.\] Therefore, taking the modulus of the last calculation yields \[\lim_{n\rightarrow \infty}\sigma_n\left(\mathbb{T}\right)=\infty.\]

Next, we prove that the images $\omega_{n}\left(\overline{\mathbb{D}}\right)$ are all contained entirely in the open disc $\mathbb{D}$ when $n$ is sufficiently large. Indeed, if this were not true, then the set $\omega_{n}\left(\overline{\mathbb{D}}\right)$ would intersect the boundary $\mathbb{T}$ frequently as $n\rightarrow \infty$. In particular, there must be a sequence $\xi_n$ such that $\xi_n \in \omega_{n}\left(\overline{\mathbb{D}}\right) \cap \mathbb{T}$ for infinitely many $n$'s. Then the mapping properties of $\omega_n$ would imply that the relationship $z_n=\xi_n/2 \in \omega_{n}\left(\mathbb{D}\right)$ and the inequality $\left|\Phi_{n}(z_n)\right|<1$ both occur infinitely often. This however is a contradiction, because \begin{equation}\nonumber
        \begin{split}
         2>2\left|\Phi_{n}(z_n)\right|&=\exp\left(\int_{\mathbb{T}}\Re{\left[\frac{1+\xi z_n}{1-\xi z_n}\right]}\,d\sigma_n(\xi) \right)\\
                      &\geq \exp \left(\frac{1-\left|z_n\right|}{1+\left|z_n\right|} \, \sigma_n \left(\mathbb{T}\right)\right)=e^{\sigma_n\left(\mathbb{T}\right)/3}
                             \end{split}
      \end{equation} and $\lim_{n\rightarrow \infty}\sigma_n\left(\mathbb{T}\right)=\infty$.

      Now, by choosing a $N>0$ so that $\omega_{n}\left(\overline{\mathbb{D}}\right) \subset \mathbb{D}$ for $n\geq N$, the subordination formula (\ref{eq:4.6}) shows that the $\eta$-transform of the law $\nu_n$ extends continuously to $\mathbb{T}$ for such $n$ and this extension takes values in the open disc $\mathbb{D}$. Consequently, the Poisson integral formula (\ref{eq:4.4}) implies that the density function $f_n$ is continuous and positive everywhere on its support $\mathbb{T}$. Finally, the analyticity property of $f_n$ comes from that of the map $\omega_n$ and from the analytic subordination \eqref{eq:4.6}.

The preceding argument proves the statements (1) and (2). To prove (3), let us introduce the auxiliary function \[g(r)=\frac{(1+r) \log r}{1-r}, \quad 0<r<1,\] and observe that $g$ is a strictly increasing and continuous function on the interval $(0,1)$ such that $g(1^{-})=-2$ and $g(0^{+})=-\infty$. In particular, the function $g$ has an inverse $g^{-1}$ such that \[\lim_{x\rightarrow -\infty}g^{-1}(x)=0.\] For $n\geq N$, we calculate \begin{eqnarray*}
\omega_n\left(\overline{\mathbb{D}}\right) & = & \{z \in \mathbb{D}:\left|\Phi_n(z)\right|\leq 1\}\\& \subset & \{z \in \mathbb{D}:g\left(\left| z \right|\right)\leq - \sigma_n \left(\mathbb{T} \right)\}\\& = & \{z \in \mathbb{D}:\left| z \right| \leq g^{-1}\left(- \sigma_n \left(\mathbb{T} \right)\right)\}.
\end{eqnarray*}
Since the last disc shrinks uniformly to the origin as $n\rightarrow \infty$, we deduce from the subordination formula (\ref{eq:4.6}) that the $\eta$-transforms $\eta_{\nu_{n}}$ converge to zero uniformly on $\overline{\mathbb{D}}$, and in particular, uniformly on the circle $\mathbb{T}$. Therefore, (3) follows from the integral formula (\ref{eq:4.4}). \end{proof}

We remark that Theorem 4.4 (3) does imply the weak convergence $\nu_{n} \Rightarrow d\theta/2\pi$.

\subsection{Superconvergence to free unitary Brownian motion}
Denote by $\lambda_t$ the law of free unitary Brownian motion at time $t>0$. The $\Sigma$-transform of $\lambda_t$ is given by
   \begin{equation}\nonumber
      \Sigma_{\lambda_t}(z)=\exp\left(\frac{t+tz}{2-2z}\right), \quad z \in \mathbb{D}.
   \end{equation}
From Biane's work \cite{BianeJFA}, we know that the infinitely divisible law $\lambda_t$ is absolutely continuous with respect to $d\theta$ on $\mathbb{T}$, and its support $\text{supp}\left(\lambda_t\right)$ is the entire circle $\mathbb{T}$ when $t>4$. For $0<t\leq 4$, we have \[\text{supp}\left(\lambda_t\right) = \left\{e^{i\theta}: -\theta_1 \leq \theta \leq \theta_1\right\},\] where $\theta_1=\theta_0+\sqrt{t-t^{2}/4} \in (0,\pi]$ and $\theta_0=\arccos\left(1-t/2\right)$. The density $d\lambda_t/d\theta$ is continuous everywhere on $\mathbb{T}$ and is strictly positive on the interior of $\text{supp}\left(\lambda_t\right)$, except at $-1$ when $t=4$. The value of this density at a point $\xi \in \mathbb{T}$ is given by $\Re z / 2\pi$, where $z$ is the only solution with a positive real part of the equation \[(z-1)\exp\left(tz/2\right)=(z+1)\xi.\]

For $t\in(0,4]$, the image $\eta_{\lambda_t}\left(\mathbb{T}\right)$ is a continuous simple closed curve whose parametrization is described as follows. First, the curve $\eta_{\lambda_t}\left(\mathbb{T}\right)$ is symmetric about the $x$-axis. Second, the interior of $\text{supp}\left(\lambda_t\right)$ is mapped bijectively to a simple curve in the open disc $\mathbb{D}$ by the function $\eta_{\lambda_t}$, and if we write the polar coordinate $\eta_{\lambda_t}\left(e^{i\theta}\right)=re^{i\psi}$ for $\theta \in (-\theta_1,\theta_1)$ then the functions $r=r(\theta)$ and $\psi=\psi(\theta)$ are defined implicitly through the equation
\begin{equation}\label{eq:4.7}
         t\,(r^2-1)=2\log r\,(1-2r\cos\psi+r^2). \end{equation} Moreover, the modulus function $r(\theta)$ is a continuous function which increases to $1$ as $\theta \rightarrow \theta_1$, and the polar angle $\psi(\theta)$ is also a continuous function with \[-\theta_0 < \psi (\theta)< \theta_0, \quad \theta \in (-\theta_1,\theta_1),\] and $\psi(\theta) \rightarrow \theta_0$ as $\theta \rightarrow \theta_1$. Finally, the function $\eta_{\lambda_t}$ maps the complementary arc $\{e^{i\theta}: \theta_1 \leq \left|\theta\right| \leq \pi\}$ onto the arc $\{e^{i\theta}: \theta_0 \leq \left|\theta\right| \leq \pi\}$ such that $\eta_{\lambda_t}\left(e^{i\theta_1}\right)=e^{i\theta_0}$, $\eta_{\lambda_t}\left(e^{-i\theta_1}\right)=e^{-i\theta_0}$, and $\eta_{\lambda_t}(-1)=-1$. We refer to \cite{Zhong1, Zhong2} for the details of these calculations.

Fix $t>0$. We shall study the superconvergence to the law $\lambda_t$. As usual, let $\{\mu_n\}_{n=1}^{\infty}$ be a sequence in $\mathcal{M}_{\mathbb{T}}$ and $k_n$ be positive integers tending to infinity. Suppose that the weak convergence $\nu_{n}=(\mu_n)^{\boxtimes k_n} \Rightarrow \lambda_t$ holds. Following Section 4.2, we have the subordination $\eta_{\nu_{n}}=\eta_{\rho_n} \circ \omega_n$ in $\mathbb{D}$, where the law $\rho_n=\mu_n \boxtimes \mu_n$ and the subordination map $\omega_n=\eta_{\tau_n}$ for some $\boxtimes$-infinitely divisible law $\tau_n$. The left inverse of $\omega_n$ is the function $\Phi_n(z)=z\Sigma_{\tau_n}(z)$, and Theorem 3.5 in \cite{Wang} implies that the limit \begin{equation}\label{eq:4.8}
         \lim_{n \rightarrow \infty}\Phi_n(z)=z\Sigma_{\lambda_t}(z)=z\exp\left(\frac{t+tz}{2-2z}\right) \end{equation} holds uniformly for $z$ in any compact subset of the disc $\mathbb{D}$.

Since the law $\tau_n$ is also $\bmultg$-infinitely divisible, the origin is a simple zero for the map $\omega_n$. So, the quotient \[g_n(z)=\frac{\omega_n(z)}{z}\]  is analytic in $\mathbb{D}$ and continuous on the closed disk $\overline{\mathbb{D}}$. Notice that the function $g_n(z) \neq 0$ for all $z \in \overline{\mathbb{D}}$, because $\omega_n$ is one to one in $\overline{\mathbb{D}}$. Our first lemma shows that for sufficiently large $n$, the range $g_n\left(\overline{\mathbb{D}}\right)$ will be bounded away from zero uniformly in $n$. \begin{lemma}\label{lemma:4.5}
There exist $N_1>0$ and a positive constant $C \in (0,1)$ such that \[\left|g_n(z)\right| \geq C, \quad n \geq N_1, \quad z \in \overline{\mathbb{D}}.\] \end{lemma}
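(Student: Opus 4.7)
I would split the proof into a reduction to the boundary $\mathbb{T}$ via the minimum modulus principle, followed by a Herglotz-type estimate on $\mathbb{T}$ driven by the free L\'{e}vy-Hin\v{c}in representation of $\Phi_n$.

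\emph{Reduction to $\mathbb{T}$.} By Proposition 2.3, $\omega_n$ is continuous and one-to-one on $\overline{\mathbb{D}}$, and since $\tau_n\in\mathcal{M}_{\mathbb{T}}$ has non-zero mean, $\omega_n'(0)=\text{m}(\tau_n)\neq 0$. Hence $\omega_n$ has a simple zero at $0$ and no other zeros on $\overline{\mathbb{D}}$, so $g_n$ is analytic and non-vanishing on $\mathbb{D}$ and continuous and non-vanishing on $\overline{\mathbb{D}}$. Applying the maximum modulus principle to $1/g_n$ reduces the lemma to a uniform lower bound $|g_n(\xi)|=|\omega_n(\xi)|\geq C$ for $\xi\in\mathbb{T}$ and all large $n$.

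\emph{Boundary estimate.} Using the representation \eqref{eq:2.4} for $\Sigma_{\tau_n}$, write
\[
\Phi_n(z)=z\Sigma_{\tau_n}(z)=\alpha_n\,z\,\exp\!\left(\int_{\mathbb{T}}\frac{1+\zeta z}{1-\zeta z}\,d\sigma_n(\zeta)\right),\qquad z\in\mathbb{D},
\]
with $\alpha_n\in\mathbb{T}$ and $\sigma_n$ a finite positive Borel measure on $\mathbb{T}$. Then $|\Phi_n'(0)|=\exp(\sigma_n(\mathbb{T}))$, and the local uniform convergence \eqref{eq:4.8} forces $|\Phi_n'(0)|\to|\Sigma_{\lambda_t}(0)|=e^{t/2}$, so $\sigma_n(\mathbb{T})\to t/2$ and $K:=\sup_n\sigma_n(\mathbb{T})<\infty$. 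Now fix $\xi\in\mathbb{T}$. Proposition 2.3 places $\omega_n(\xi)$ on $\partial\omega_n(\mathbb{D})$, so either $|\omega_n(\xi)|=1$ (nothing to prove) or $r:=|\omega_n(\xi)|<1$ and $|\Phi_n(\omega_n(\xi))|=1$. In the latter case the Poisson-kernel bound $\Re\!\frac{1+\zeta z}{1-\zeta z}\leq\frac{1+r}{1-r}$ for $|z|=r$ gives
\[
1=|\Phi_n(\omega_n(\xi))|\leq r\exp\!\left(\frac{1+r}{1-r}\,\sigma_n(\mathbb{T})\right)\leq r\exp\!\left(\frac{1+r}{1-r}\,K\right),
\]
equivalently $\frac{(1-r)(-\log r)}{1+r}\leq K$. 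The left-hand side is strictly decreasing on $(0,1)$ and tends to $+\infty$ at $0^+$, so $r\geq C$ for some $C=C(K)\in(0,1)$, proving the claim.

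\emph{Main obstacle.} The delicate point is identifying a single quantity that is both controlled by the hypothesis \eqref{eq:4.8} and in turn controls how close $\partial\omega_n(\mathbb{D})$ can come to the origin. The L\'{e}vy-Hin\v{c}in mass $\sigma_n(\mathbb{T})$ plays this dual role: it equals $\log|\Phi_n'(0)|$, so \eqref{eq:4.8} pins it down, and it is the ``amplification constant'' in the Herglotz bound relating $|\Phi_n(w)|$ to $|w|$. One is tempted instead to transport the interior convergence $\omega_n\to\eta_{\lambda_t}$ to the boundary, but \eqref{eq:4.8} only supplies local uniform convergence on $\mathbb{D}$; the Herglotz route circumvents this difficulty by extracting the desired boundary bound directly from $\sigma_n(\mathbb{T})$.
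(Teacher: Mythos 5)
Your proof is correct, but it takes a genuinely different route from the paper's. The paper's argument is a soft compactness/contradiction argument: assuming $|g_n(z_n)|\to 0$ for some $z_n\in\overline{\mathbb{D}}$, it uses $|\omega_n(z_n)|=|z_n||g_n(z_n)|\to 0$ together with the inversion $z_n=\Phi_n(\omega_n(z_n))$ and the local uniform convergence \eqref{eq:4.8} to force $z_n\to 0$, at which point the local uniform convergence $g_n\to\eta_{\lambda_t}(z)/z$ (coming from $\tau_n\Rightarrow\lambda_t$) yields $g_n(z_n)\to e^{-t/2}\neq 0$, a contradiction. You instead reduce to the boundary via the maximum modulus principle applied to $1/g_n$, and then obtain a quantitative bound on $r=|\omega_n(\xi)|$ from the Herglotz representation of $\Phi_n$ and the Poisson-kernel bound, exploiting the characterization from Proposition 2.3 that a boundary point $\omega_n(\xi)\in\mathbb{D}$ satisfies $|\Phi_n(\omega_n(\xi))|=1$. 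Both are sound; your version buys an explicit constant $C=C(\sup_n\sigma_n(\mathbb{T}))$, and it is closer in spirit to the Herglotz/argument computations the paper actually deploys in the subsequent Lemma 4.6, whereas the paper's proof of Lemma 4.5 is shorter and more purely topological. One minor remark: you do not need the strict monotonicity of $r\mapsto(1-r)(-\log r)/(1+r)$; continuity together with the blow-up at $0^+$ already gives the lower bound, which would simplify the last step.
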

\begin{proof}
Suppose, to the contrary, that there is a sequence $z_n \in \overline{\mathbb{D}}$ such that $\left|g_n(z_n)\right|\rightarrow 0$ as $n \rightarrow \infty$. By dropping to a subsequence if necessary, but without changing our notation, we assume that the sequence $z_n$ converges to a point $z\in \overline{\mathbb{D}}$. Note that \[\left|\omega_n(z_n)\right|=\left|z_n\right|\left|g_n(z_n)\right| \rightarrow \left|z\right|\cdot 0=0\] as $n \rightarrow \infty$. So, we have $\left|\omega_n(z_n)\right| \leq 1/2$ when $n$ is large enough. It follows from Proposition 2.3 that the function $\omega_n$ is analytic at such $z_n$, and therefore we have $z_n=\Phi_n\left(\omega_n(z_n)\right)$ by analytic continuation. Denoting $w_n=\omega_n(z_n)$, the equation \eqref{eq:4.8} shows that \[\left|z_n\right| \leq \left|\Phi_n(w_n) - w_n\Sigma_{\lambda_t}(w_n)\right| + \left|w_n\Sigma_{\lambda_t}(w_n)\right| \rightarrow 0\] as $n \rightarrow \infty$. Hence, we obtain $z=0$ and all $z_n$ will eventually fall into a compact neighborhood of the origin. Notice that the limit \eqref{eq:4.8} also yields the weak convergence $\tau_n \Rightarrow \lambda_t$, and as a result, the sequence $g_n$ converges locally uniformly in $\mathbb{D}$ to the function $\eta_{\lambda_t}(z)/z$. Therefore, we have \[0=\lim_{n \rightarrow \infty}g_n(z_n)=\eta_{\lambda_t}^{\prime}(0)=1/\Sigma_{\lambda_t}(0)=e^{-t/2},\] which is a contradiction. Therefore, the range of $g_n$ must be uniformly bounded away from zero when $n$ is large enough.
\end{proof}

Since each $g_n$ never vanishes in the open disc $\mathbb{D}$, $g_n$ has an analytic logarithm defined in $\mathbb{D}$. But, of course, the branch of this logarithm may vary as the index $n$ does. The point of our next lemma is that one can always take the principal logarithm of $g_n$, even at the boundary $\mathbb{T}$, when $n$ is large enough.
\begin{lemma}\label{lemma 4.6}
There exists $N_2>N_1$ such that \[g_n\left(\overline{\mathbb{D}}\right) \subset \{re^{i\theta}: C \leq r \leq 1, \left|\theta\right| \leq 2\}, \quad n \geq N_2.\] Consequently, the range $g_n\left( \overline{\mathbb{D}}\right)$ misses the interval $[-1,0]$ for all $n \geq N_2$, and therefore the principal logarithm $\log g_n$ is defined in $\overline{\mathbb{D}}$ for such $n$.
\end{lemma}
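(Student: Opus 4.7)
I argue by contradiction. If the inclusion fails, pass to a subsequence to obtain $z_n\in\overline{\mathbb{D}}$ with $|\arg g_n(z_n)|>2$; Lemma~\ref{lemma:4.5} already supplies $C\leq|g_n(z_n)|\leq1$ for $n\geq N_1$. By compactness, extract further subsequences so that $z_n\to z_*\in\overline{\mathbb{D}}$ and $w_n:=\omega_n(z_n)\to w_*\in\overline{\mathbb{D}}$. The goal is to show that $g_n(z_n)\to g_{\lambda_t}(z_*):=\eta_{\lambda_t}(z_*)/z_*$ along this subsequence. Since $g_{\lambda_t}$ extends continuously and without zeros to $\overline{\mathbb{D}}$, it admits a continuous branch of argument; that branch is harmonic in $\mathbb{D}$ and hence attains its extremes on $\mathbb{T}$, where the explicit parametrization of $\eta_{\lambda_t}$ recalled at the start of this subsection yields $|\arg g_{\lambda_t}(z)|\leq \theta_1-\theta_0=\sqrt{t-t^2/4}\leq 1$ on the complementary arc, with analogous bounds on the supporting arc and for $t>4$. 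In particular $|\arg g_{\lambda_t}(z_*)|<2$, which will give the contradiction.

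\textbf{Step 1: rule out $w_*=1$.} The convergence $\tau_n\Rightarrow\lambda_t$ implies $\Sigma_{\tau_n}\to\Sigma_{\lambda_t}$ locally uniformly near $0$; matching the free L\'{e}vy-Hin\v{c}in data \eqref{eq:2.4} shows $\alpha_n\to1$ and $\sigma_n\to(t/2)\delta_1$ weakly on $\mathbb{T}$, where $\Sigma_{\tau_n}(z)=\alpha_n\exp\!\bigl(\int(1+\xi z)/(1-\xi z)\,d\sigma_n(\xi)\bigr)$. Using $\Re((1+\xi w)/(1-\xi w))=(1-|w|^2)/|1-\xi w|^2$ together with the crude estimate $|1-\xi w|\leq|1-\xi|+|1-w|$ and the concentration of $\sigma_n$ near $\xi=1$, a short computation yields $|\Phi_n(w)|=|w|\,|\Sigma_{\tau_n}(w)|>1$ uniformly for $w\in\mathbb{D}$ with $|w-1|\leq\rho_0$, once $\rho_0>0$ is fixed small (depending on $t$) and $n$ is large. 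By Proposition~2.3(4) any boundary point $\xi\in\omega_n(\overline{\mathbb{D}})\cap\mathbb{T}$ with $|\xi-1|\leq\rho_0/2$ would force the nearby radii $r\xi\in\omega_n(\mathbb{D})\subset\{|\Phi_n|<1\}$ for $r$ close to $1$, contradicting the previous estimate. Hence $\omega_n(\overline{\mathbb{D}})$ avoids a fixed neighbourhood of $1$ for large $n$, and $w_*\neq 1$.

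\textbf{Step 2: identify the limit and conclude.} If $z_*\in\mathbb{D}$, the convergence $g_n(z_n)\to g_{\lambda_t}(z_*)$ is immediate from the locally uniform convergence $g_n\to\eta_{\lambda_t}/z$ on $\mathbb{D}$ noted in the proof of Lemma~\ref{lemma:4.5}. If $z_*\in\mathbb{T}$, use $g_n(z_n)=w_n/z_n=1/\Sigma_{\tau_n}(w_n)$ (from $z_n=\Phi_n(w_n)=w_n\Sigma_{\tau_n}(w_n)$, valid in $\mathbb{D}$ and extended by continuity where needed). Since $w_*$ is bounded away from $1$ by Step~1, the integrand $\xi\mapsto(1+\xi w_n)/(1-\xi w_n)$ is uniformly bounded on a neighbourhood of $\operatorname{supp}\sigma_n$, so the weak convergence $\sigma_n\to(t/2)\delta_1$ together with $\alpha_n\to1$ gives $\Sigma_{\tau_n}(w_n)\to\Sigma_{\lambda_t}(w_*)$. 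It follows that $z_*=w_*\Sigma_{\lambda_t}(w_*)=\Phi(w_*)$, that is $w_*=\eta_{\lambda_t}(z_*)$ by Proposition~2.3 applied to $\lambda_t$, and consequently $g_n(z_n)\to 1/\Sigma_{\lambda_t}(\eta_{\lambda_t}(z_*))=\eta_{\lambda_t}(z_*)/z_*=g_{\lambda_t}(z_*)$. The bound on $|\arg g_{\lambda_t}|$ from the opening paragraph then gives the contradiction. The final assertion of the lemma is then immediate: every point of $[-1,0]$ has argument $\pi>2$, so the sector $\{re^{i\theta}:C\leq r\leq 1,\,|\theta|\leq 2\}$ misses $[-1,0]$ and admits the principal branch of the logarithm.

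\textbf{Main obstacle.} The crux is Step~1. Because the convergence $\omega_n\to\eta_{\lambda_t}$ is only locally uniform on $\mathbb{D}$, there is no a priori reason why $w_n=\omega_n(z_n)$ could not accumulate at the singular point $w=1$ of $\Sigma_{\lambda_t}$; at such a limit the passage to the limit in $\int(1+\xi w)/(1-\xi w)\,d\sigma_n(\xi)$ breaks down. The resolution exploits precisely that $\sigma_n$ concentrates at $1$: this concentration forces $|\Phi_n|$ to be large throughout a whole neighbourhood of $1$, and Proposition~2.3 then pushes the image $\omega_n(\overline{\mathbb{D}})$ uniformly away from $1$. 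Once $w_*\neq1$ is secured, the rest of the argument is a routine passage to the limit in the Nevanlinna representation.
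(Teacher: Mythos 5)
Your proof takes a genuinely different route from the paper's, but Step~2 has a real gap in the boundary case $|w_*|=1$, and that case is not avoidable.

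Step~1 is fine: the argument that $|\Phi_n(w)|>1$ on $\{w\in\mathbb{D}:|w-1|\le\rho_0\}$ for large $n$ works out (choosing $\eta,\rho_0$ with $\eta+\rho_0<\sqrt{t/2}$, say, and using the lower semicontinuity of $\liminf_n\sigma_n$ on open sets near $1$), and combined with Proposition~2.3 it does force $\omega_n(\overline{\mathbb{D}})$ away from a fixed neighbourhood of $1$.

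The gap is in Step~2, when $z_*\in\mathbb{T}$ and $|w_*|=1$ (this case cannot be excluded: for $0<t\le4$ the limit set $\eta_{\lambda_t}(\overline{\mathbb{D}})\cap\mathbb{T}$ is a nonempty arc, so $|w_n|=|\omega_n(z_n)|\to 1$ can certainly occur). You assert that ``the integrand $\xi\mapsto(1+\xi w_n)/(1-\xi w_n)$ is uniformly bounded on a neighbourhood of $\operatorname{supp}\sigma_n$,'' and deduce $\Sigma_{\tau_n}(w_n)\to\Sigma_{\lambda_t}(w_*)$ from $\sigma_n\Rightarrow(t/2)\delta_1$. But weak convergence to $\delta_1$ does \emph{not} localize $\operatorname{supp}\sigma_n$ near $1$ --- it only forces $\sigma_n(\{|\xi-1|>\eta\})\to0$, and $\operatorname{supp}\sigma_n$ may still be all of $\mathbb{T}$. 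When $|w_n|\to1$, the kernel $(1+\xi w_n)/(1-\xi w_n)$ blows up near $\xi=1/\overline{w_n}\approx w_*$ (which, by Step~1, lies at distance $\ge\rho_0/2$ from $1$, hence squarely in the region where $\sigma_n$ is small but not zero). So you face a genuine ``small mass $\times$ unbounded integrand'' problem on the tail $\{|\xi-1|>\eta\}$, and nothing in the argument controls it; the bound $\Re u_n(w_n)\le-\log C$ coming from Lemma~\ref{lemma:4.5} controls the real part but not the imaginary part of the tail integral, because $\Im[(1+\xi w)/(1-\xi w)]/\Re[(1+\xi w)/(1-\xi w)]=2|w|\sin(\cdot)/(1-|w|^2)$ is unbounded as $|w|\to1$. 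In short, local uniform convergence of $\Phi_n$ on $\mathbb{D}$ does not propagate to the sequence $w_n$ when it leaves every compact subset of $\mathbb{D}$, and the identification $w_*=\eta_{\lambda_t}(z_*)$ (hence $g_n(z_n)\to\eta_{\lambda_t}(z_*)/z_*$) is left unproved.

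The paper sidesteps all boundary considerations: it represents $g_n(z)=\overline{\alpha_n}\,\eta_{\lambda_{2\sigma_n(\mathbb{T})}}(l_n(\overline{\alpha_n}z))/l_n(\overline{\alpha_n}z)$, where $l_n$ is the second subordination function for $\varsigma_n\boxtimes\lambda_{2\sigma_n(\mathbb{T})}$ with the normalized measure $\varsigma_n=\sigma_n/\sigma_n(\mathbb{T})$. Since $l_n$ maps $\mathbb{D}$ \emph{into} $\mathbb{D}$, the quotient $\eta_{\lambda_s}(w)/w$ is only ever evaluated at $w\in\mathbb{D}$, where the uniform bound $|\arg(\eta_{\lambda_s}(w)/w)|\le1$ (the paper's \eqref{eq:4.11}, uniform in $s>0$) applies directly; then $\alpha_n\to1$ gives the slack to reach $|\theta|\le2$, and the bound on $\overline{\mathbb{D}}$ follows by continuity of $g_n$. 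This is why the paper does not need to pass to limits of $\sigma_n$-integrals at boundary points, nor to locate $w_*$. If you want to salvage your contradiction argument, you would essentially have to supply a substitute for that representation --- e.g.\ a tail estimate on $\Im\int(1+\xi w_n)/(1-\xi w_n)\,d\sigma_n$ that exploits both $\Re u_n(w_n)\le -\log C$ and the separation $|1/\overline{w_n}-1|\ge\rho_0/2$; as written the step is a gap.
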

\begin{proof}
Since the image $g_n\left( \overline{\mathbb{D}}\right)$ is the closure of the set $g_n\left(\mathbb{D}\right)$ and $0 \notin g_n\left( \overline{\mathbb{D}}\right)$ for $n \geq N_1$, it suffices to show that the principal argument of any point in $g_n\left(\mathbb{D}\right)$ will never exceed $2$ in magnitude for large $n$.

To this end, let us first show that this phenomenon does happen for the functions $\eta_{\lambda_s}(z)/z$, $s>0$. Since $\eta_{\lambda_s}(\overline{z})=\overline{\eta_{\lambda_s}(z)}$ for $z \in \mathbb{D}$, it is enough to consider the image points of the form: $\eta_{\lambda_s}(z)=re^{i\theta}$ where the polar coordinates $0<r<1$ and $0 \leq \theta \leq \pi$. For such points, we have \begin{equation}\label{eq:4.9}
         \frac{\eta_{\lambda_s}(z)}{z}=\exp\left(\frac{s+s\eta_{\lambda_s}(z)}{2\eta_{\lambda_s}(z)-2}\right)=\exp\left(\frac{sr^2-s-2isr\sin \theta}{2-4r\cos\theta+2r^2}\right). \end{equation} Meanwhile, Proposition 2.3 (2) shows that \begin{equation}\label{eq:4.10}
        1>\left|re^{i\theta}\Sigma_{\lambda_s}\left(re^{i\theta}\right)\right|=r\exp\left(\frac{s-sr^2}{2-4r\cos\theta+2r^2}\right). \end{equation} It follows from \eqref{eq:4.9} and \eqref{eq:4.10} that the real part of $\eta_{\lambda_s}(z)/z$ is positive and the principal argument \begin{equation}\label{eq:4.11}
             0 \leq \arg\frac{\eta_{\lambda_s}(z)}{z} \leq \frac{2r\ln r}{r^2-1} \leq 1,
            \end{equation} as desired. Notice that this estimate is uniform for all $s>0$.

        We next estimate the argument of $g_n$. First, we recall the free L\'{e}vy-Hin\v{c}in's formula for the function $\Phi_n$:
\begin{equation}\nonumber
              \Phi_n(z)=\alpha_{n}\,z\,\exp\left(\int_{\mathbb{T}}\frac{1+\xi z}{1-\xi z}\,d\sigma_n(\xi) \right), \quad z\in \mathbb{D},
            \end{equation} and note that the convergence \eqref{eq:4.8} implies \[\lim_{n \rightarrow \infty} \alpha_n=1 \quad \text{and} \quad \sigma_n \Rightarrow (t/2)\,\delta_1.\] (cf. Theorem 3.5 (iii) of \cite{Wang}.) The second limit allows us to introduce a probability law $\varsigma_n \in \mathcal{M}_{\mathbb{T}}$ by the normalization $d\varsigma_n(\xi)=d\sigma_n(\xi)/\sigma_n\left(\mathbb{T}\right)$ for large $n$, say, for $n\geq M >N_1$. For any $z \in \mathbb{D}$ and $n \geq M$, note that \begin{equation}\nonumber
        \begin{split}
         \frac{1}{g_n(z)}=\frac{z}{\omega_n(z)}=\frac{\Phi_n\left(\omega_n(z)\right)}{\omega_n(z)}&=\alpha_{n}\exp\left(\frac{2\sigma_n \left(\mathbb{T}\right)}{2}\int_{\mathbb{T}}\frac{1+\xi \omega_n(z)}{1-\xi \omega_n(z)}\,d\varsigma_n(\xi) \right)\\
                      &=\alpha_{n}\exp\left(\frac{2\sigma_n \left(\mathbb{T}\right)}{2}\frac{1+\eta_{\varsigma_n} \left(\omega_n(z)\right)}{1-\eta_{\varsigma_n} \left(\omega_n(z)\right)}\right)\\ &=\alpha_{n}\,\Sigma_{\lambda_{2\sigma_n \left(\mathbb{T}\right)}}\left(\eta_{\varsigma_n} \left(\omega_n(z)\right)\right).
                             \end{split}
      \end{equation}

      On the other hand, let us examine the free convolution $\varsigma_n \boxtimes \lambda_{2\sigma_n \left(\mathbb{T}\right)}$. By analytic subordination, there are two analytic self-maps $h_n$ and $l_n$ in $\mathbb{D}$ such that $h_n(0)=0=l_n(0)$ and \[\eta_{\varsigma_n \boxtimes \lambda_{2\sigma_n \left(\mathbb{T}\right)}}(z)=\eta_{\varsigma_n}\left(h_n(z)\right)=\eta_{ \lambda_{2\sigma_n \left(\mathbb{T}\right)}}\left(l_n(z)\right), \quad z \in \mathbb{D}.\] Since any $\eta$-transform is a contraction in $\mathbb{D}$ and $\varsigma_n \in \mathcal{M}_{\mathbb{T}}$, we have \[h_n=\eta_{\varsigma_n}^{-1} \circ \eta_{\varsigma_n \boxtimes \lambda_{2\sigma_n \left(\mathbb{T}\right)}}\] in a small neighborhood of the origin. Then a formal calculation based on the multiplicative property of the $\Sigma$-transform leads to \begin{equation}\nonumber \begin{split}
         \alpha_{n}z&=\alpha_{n}\,h_{n}(z)\,\Sigma_{\lambda_{2\sigma_n \left(\mathbb{T}\right)}}\left(\eta_{\varsigma_n \boxtimes \lambda_{2\sigma_n \left(\mathbb{T}\right)}} (z)\right)\\
                      &=\alpha_{n}\,h_{n}(z)\exp\left(\sigma_n \left(\mathbb{T}\right)\frac{1+\eta_{\varsigma_n}\left(h_n(z)\right)}{1-\eta_{\varsigma_n} \left(h_n(z)\right)}\right)\\ &=\alpha_{n}\,h_n(z)\exp\left(\int_{\mathbb{T}}\frac{1+\xi h_n(z)}{1-\xi h_n(z)}\,d\sigma_n(\xi) \right) = \Phi_{n}\left(h_{n}(z)\right)
                             \end{split}
      \end{equation} for $z$ near the origin. Therefore, the uniqueness part of Proposition 2.3 implies that \[\omega_n(z)=h_n\left(\overline{\alpha_n}\,z\right)\] first in a small neighborhood of the origin, and then to the entire disc $\mathbb{D}$ by analytic continuation. This fact, together with the previous calculations regarding $1/g_n(z)$, gives the following representation for $g_n(z)$: \[g_n(z)=\overline{\alpha_{n}}\,\frac{\eta_{\lambda_{2\sigma_n \left(\mathbb{T}\right)}}\left(l_n\left(\overline{\alpha_n}\,z\right)\right)}{l_n\left(\overline{\alpha_n}\,z\right)},\quad z \in \mathbb{D}.\] We conclude from this formula and the uniform estimate \eqref{eq:4.11} that the image $g_n\left(\mathbb{D}\right)$ is always contained in the truncated circle sector \[A_n=\overline{\alpha_n}\,\cdot\{re^{i\theta}: C \leq r \leq 1, -1\leq \theta \leq 1\}.\]

      Finally, the desired $N_2$ can be found because $\alpha_n \rightarrow 1$ as $n \rightarrow \infty$.       \end{proof}

      We now present the superconvergence result.
      \begin{thm}\label{thm:4.8}
      Suppose that the free convolutions $\nu_n=(\mu_n)^{\boxtimes k_n}$ converge weakly to the law $\lambda_t$ as $n\rightarrow \infty$.
Then for sufficiently large $n$, we have:
\begin{enumerate}[$(1)$]
\item Each measure $\nu_n$ is absolutely continuous with respect to $d\theta$ on $\mathbb{T}$.
\item The density $f_{n}=d\nu_n/d\theta$ is continuous on $\mathbb{T}$, and $f_n$ extends analytically to each point in the interior of $\emph{supp}\left(\lambda_t\right)$.
\item The densities $f_n$ converge to $d\lambda_t/d\theta$ uniformly on $\mathbb{T}$ as $n \rightarrow \infty$.
\end{enumerate}
      \end{thm}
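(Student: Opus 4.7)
My plan combines local applications of Theorem 4.3 for parts (1) and (2) with a global uniform-convergence analysis of the subordination functions for (3). For any compact sub-arc $I$ contained in the interior of $\mathrm{supp}(\lambda_t)$, the density $d\lambda_t/d\theta$ is continuous and bounded away from zero by the description of $\lambda_t$ recalled just before the theorem, so Theorem \ref{thm:4.3} applies and yields absolute continuity of $\nu_n$ on $I$ for large $n$, analytic continuation of $f_n$ to a complex neighborhood of $I$, and local uniform convergence $f_n \to d\lambda_t/d\theta$ there. Exhausting the interior of $\mathrm{supp}(\lambda_t)$ by such arcs gives (1) and (2).

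The substance of the theorem is the uniform convergence on all of $\mathbb{T}$ in (3), including the complementary arc when $t \le 4$ and the boundary $\partial\,\mathrm{supp}(\lambda_t)$ where $d\lambda_t/d\theta$ vanishes. The strategy is to first show $\omega_n \to \eta_{\lambda_t}$ uniformly on $\overline{\mathbb{D}}$, then pass this to $\eta_{\nu_n}$ through the subordination \eqref{eq:4.6}, and finally derive density convergence via the Poisson representation \eqref{eq:4.4}. By Lemmas \ref{lemma:4.5} and \ref{lemma 4.6}, for $n \ge N_2$ the logarithm $\log g_n = \log(\omega_n/z)$ is a uniformly bounded family of analytic functions on $\mathbb{D}$, continuous on $\overline{\mathbb{D}}$; combined with Theorem 3.5 of \cite{Wang} applied to \eqref{eq:4.8}, this yields $\tau_n \Rightarrow \lambda_t$ and hence $\omega_n \to \eta_{\lambda_t}$ locally uniformly in $\mathbb{D}$.

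The main obstacle will be upgrading this interior convergence to uniform convergence on $\overline{\mathbb{D}}$. I would exploit that $\omega_n$ is the unique global inverse of $\Phi_n$ (Proposition 2.3), that $\Phi_n \to \Phi_{\lambda_t}$ locally uniformly on $\mathbb{D}\setminus\{1\}$, and that by Lemma \ref{lemma 4.6} the image $\omega_n(\overline{\mathbb{D}})$ lies in a sector bounded away from the essential singularity $z = 1$ of $\Phi_{\lambda_t}$. A Rouch\'e-type argument applied on a compact neighborhood of $\eta_{\lambda_t}(\overline{\mathbb{D}})$ should yield uniform convergence of $\omega_n$ on $\overline{\mathbb{D}}$. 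Once this is established, the composition $\eta_{\nu_n} = \eta_{\rho_n} \circ \omega_n$ converges uniformly on $\overline{\mathbb{D}}$ to $\eta_{\lambda_t}$, using that $\eta_{\rho_n}(z) \to z$ uniformly on compact subsets of $\mathbb{D}$ (from $\rho_n \Rightarrow \delta_1$), together with a careful boundary analysis at points where $|\omega_n(\xi)| \to 1$. Statement (3) then follows by taking boundary values in \eqref{eq:4.4}, since the Poisson formula expresses the density as a continuous function of $\eta_{\nu_n}$.
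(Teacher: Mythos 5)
Your plan for parts (1) and (2) via Theorem~\ref{thm:4.3} only works on the interior of $\mathrm{supp}(\lambda_t)$; for $t\le 4$ this is a proper sub-arc, so it does not establish absolute continuity of $\nu_n$ on all of $\mathbb{T}$ (part (1) is a statement about the whole circle, and a priori $\nu_n$ could have atoms or singular mass on the complementary arc). In the paper this is handled by showing that $\eta_{\nu_n}$ extends continuously to $\overline{\mathbb{D}}$ with range uniformly bounded away from $1$ (inequality~\eqref{eq:4.15}), so that the Poisson-type kernel $p_n$ is continuous on $\overline{\mathbb{D}}$.

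The more serious gap is in the passage from $\omega_n$ to $\eta_{\nu_n}$. You propose to use $\eta_{\nu_n}=\eta_{\rho_n}\circ\omega_n$ together with $\eta_{\rho_n}(z)\to z$ locally uniformly on compacts of $\mathbb{D}$, acknowledging a needed ``careful boundary analysis at points where $|\omega_n(\xi)|\to 1$.'' That boundary analysis is precisely the crux, and local uniform convergence on compacts of $\mathbb{D}$ does not control $\eta_{\rho_n}(\omega_n(\xi))-\omega_n(\xi)$ when $\omega_n(\xi)$ approaches $\mathbb{T}$ (which it does on $\mathbb{T}\setminus I_\varepsilon$, since $\eta_{\lambda_t}$ maps the complementary arc onto an arc of $\mathbb{T}$). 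The paper's resolution is an \emph{algebraic} identity, not an analytic limit: rewriting $\Phi_n(\omega_n(z))=z$ with $\Phi_n(w)=w[B_{\rho_n}(w)]^{(k_n-2)/2}$ yields
\[
\eta_{\nu_n}(z)=\omega_n(z)\,[g_n(z)]^{2/(k_n-2)},\qquad z\in\overline{\mathbb{D}},
\]
where $g_n=\omega_n/z$, and then Lemmas~\ref{lemma:4.5}, \ref{lemma 4.6} (the uniform lower bound $|g_n|\ge C$ and the uniform argument bound $|\arg g_n|\le 2$) force $[g_n]^{2/(k_n-2)}\to 1$ uniformly on $\overline{\mathbb{D}}$, hence $\|\eta_{\nu_n}-\omega_n\|_\infty\to 0$. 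Your proposal never produces this identity, and without it the boundary contribution cannot be controlled. Note also that the paper does not (and does not need to) prove $\omega_n\to\eta_{\lambda_t}$ uniformly on $\overline{\mathbb{D}}$, which is what your Rouch\'e argument aims at; it only establishes the sectorial containment~\eqref{eq:4.13} of $\omega_n(\mathbb{T}\setminus I_\varepsilon)$ near the boundary, and that argument rests crucially on Proposition~2.4 (a radial line meets $\partial\eta_{\tau_n}(\mathbb{D})$ at most once for $\boxtimes$-infinitely divisible $\tau_n$), an ingredient absent from your outline.
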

      \begin{proof} For $t>4$, the statements (1) to (3) follow directly from Theorem 4.3 because the continuous density $d\lambda_t/d\theta$ is positive everywhere on $\text{supp}\left(\lambda_t\right)=\mathbb{T}$. We shall focus on the case $t\in (0,4]$.

      Fix $t\in (0,4]$. Recall the definition of the angles \[\theta_0=\arccos\left(1-t/2\right) \quad \text{and} \quad \theta_1=\theta_0+\sqrt{t-t^{2}/4},\] and let $\varepsilon>0$ be any small number such that $\theta_0 < \theta_1 - \varepsilon < \theta_1$ and \[\varepsilon< \frac{1}{10}\min\{\text{d}[1,\eta_{\lambda_t}\left(\mathbb{T}\right)], \, \text{d}[0,\eta_{\lambda_t}\left(\mathbb{T}\right)]\}.\] Here the notation $\text{d}[z,\eta_{\lambda_t}\left(\mathbb{T}\right)]$ means the minimum distance between $z \in \mathbb{C}$ and the curve $\eta_{\lambda_t}\left(\mathbb{T}\right)$. (Notice that $0,1 \notin \eta_{\lambda_t}\left(\mathbb{T}\right)$.)
We introduce two positive quantities: \[\delta=\delta(\varepsilon)=\theta_0 - \arg \eta_{\lambda_t}\left(e^{i(\theta_1-\varepsilon)}\right), \quad d=d(\varepsilon)=1 - \left|\eta_{\lambda_t}\left(e^{i(\theta_1-\varepsilon)}\right)\right|,\] and define the symmetric open arc \[I_{\varepsilon}=\{e^{i\theta}:-\theta_1+\varepsilon < \theta < \theta_1-\varepsilon\}.\] Note that $I_{\varepsilon} \subset \text{supp}\left(\lambda_t\right)$. By the continuity of $\eta_{\lambda_t}$, we have \[\lim_{\varepsilon \rightarrow 0^{+}}\delta(\varepsilon)=0=\lim_{\varepsilon \rightarrow 0^{+}}d(\varepsilon).\] So, we may assume further that $d<1/4$.

      We know from Theorem 4.3 that the continuous curve $\omega_n\left(I_{\varepsilon}\right)$ lies in $\mathbb{D}$, and it is uniformly close to the curve $\eta_{\nu}\left(I_{\varepsilon}\right)$ as $n \rightarrow \infty$. In particular, we may select a $N_3>N_2$ such that  \begin{equation}\label{eq:4.12}
             \left|\omega_n(\xi)-\eta_{\lambda_t}(\xi)\right|< \varepsilon, \quad \xi \in I_{\varepsilon}, \quad n \geq N_3.
            \end{equation}

            We first prove that for sufficiently large $n$, the rest of the boundary satisfies \begin{equation}\label{eq:4.13}
             \omega_n\left(\mathbb{T} \setminus I_{\varepsilon}\right) \subset \{z\in \overline{\mathbb{D}}: 1-2d \leq \left|z\right| \leq 1, \theta_0 - \delta \leq \left|\arg z \right| \leq \pi \}.
            \end{equation} To this end, observe that for any $\xi \in \mathbb{T} \setminus I_{\varepsilon}$ and for sufficiently large $n$, the argument $\arg \omega_n(\xi)$ cannot lie between $-\theta_0 + \delta$ and $\theta_0 - \delta$. Indeed, if this is the case, then the radius \[\{r\omega_n(\xi): 0< r \leq 1/\left|\omega_n(\xi)\right|\}\] will intersect the continuous curve $\omega_n\left(I_{\varepsilon}\right)$ at some point $\xi^{\prime} \in I_{\varepsilon}$, which is not possible by the $\boxtimes$-infinite divisibility of the subordination distribution $\tau_n$ and Proposition 2.4. Hence, we always have \[\theta_0 - \delta \leq \left|\arg \omega_n(\xi) \right| \leq \pi, \quad \xi \in \mathbb{T} \setminus I_{\varepsilon},\] whenever $n$ is sufficiently large.

            Next, concerning the condition on the modulus, suppose to the contrary that the curve $\omega_n\left(\mathbb{T} \setminus I_{\varepsilon}\right)$ often deviates from $\mathbb{T}$ by the amount of more than $2d$ as $n \rightarrow \infty$. Then there exists a sequence $\xi_n \in \mathbb{T} \setminus I_{\varepsilon}$ such that $\left|\omega_n (\xi_n)\right|<1-2d$ for all $n \geq 1$. By dropping to convergent subsequences, we have that $\omega_n (\xi_n) \rightarrow z$ and $\xi_n \rightarrow \xi$, where the limit point $\xi \in \mathbb{T} \setminus I_{\varepsilon}$ and $\left|z\right| \leq 1-2d$. Hence the inversion relationship $\xi_n=\Phi_n\left(\omega_n(\xi_n)\right)$ and the convergence \eqref{eq:4.8} show that $\eta_{\lambda_t}(\xi)=z$. Since $z \in \mathbb{D}$, we must have \[ \theta_1 - \varepsilon \leq \left|\arg \xi \right| \leq \theta_1.\] So, the continuity of $\eta_{\lambda_t}$ and the parametrization equation \eqref{eq:4.7} imply that $\left|z\right|>1-d$, a contradiction to the inequality $\left|z\right| \leq 1-2d$. In summary, we conclude that there is a $N_4>N_3$ so that \eqref{eq:4.13} holds for $n \geq N_4$. In particular, this shows that the image $\omega_n\left(\overline{\mathbb{D}}\right)$ will be bounded away from the point $\xi=1$ for such $n$.

      Now, we prove the statements (1) to (3). For $n \geq N_4$, by virtue of \eqref{eq:4.5} and \eqref{eq:4.6} we can re-write the equation $\Phi_n\left(\omega_n(z)\right)=z$ as follows: \[z^{2}[\eta_{\nu_n}(z)]^{k_n-2}=[\omega_n(z)]^{k_n}, \quad z \in \mathbb{D}.\] Moreover, Lemma 4.6 allows us to take the principal $(k_n-2)$-root of the last formula, and we obtain that \begin{equation}\label{eq:4.14}
             \eta_{\nu_n}(z)=\omega_n(z)[g_n(z)]^{2/(k_n-2)}, \quad z \in \overline{\mathbb{D}}.
            \end{equation} This shows that the function $\eta_{\nu_n}$ extends continuously to the circle $\mathbb{T}$. Also, by Lemmas 4.5 and 4.6 and \eqref{eq:4.14}, we have  \begin{equation}\nonumber
        \begin{split}
         \left|\omega_n(z) - \eta_{\nu_n}(z)\right|&=\left|\omega_n(z)\right|\left|1-[g_n(z)]^{2/(k_n -2)}\right|\\
                      & \leq \left|g_n(z)\right|^{2/k_n -2}\left|1- \exp\left(\frac{2 \arg g_n(z)}{k_n -2}i\right)\right|+1-\left|g_n(z)\right|^{2/(k_n -2)}\\ & \leq 2\left|\sin\left(2/(k_n -2)\right)\right|+1-C^{\, 2/(k_n -2)} \rightarrow 0
                             \end{split}
      \end{equation} uniformly for all $z \in \overline{\mathbb{D}}$ as $n \rightarrow \infty $. Combining this with \eqref{eq:4.12} and \eqref{eq:4.13}, we conclude that the ranges $\eta_{\nu_n}\left(\overline{\mathbb{D}}\right)$ are uniformly bounded away from the point $\xi=1$ for large $n$; more precisely, there exists $N_5>N_4$ such that \begin{equation}\label{eq:4.15} \left|1 - \eta_{\nu_n}(z)\right| \geq  \frac{1}{2}\text{d}[1,\eta_{\lambda_t}\left(\mathbb{T}\right)], \quad z \in \overline{\mathbb{D}}, \quad n \geq N_5.\end{equation} This implies that the expression \[p_n(z)=\frac{1}{2\pi}\frac{1-|\et{\nu_n}(z)|^2}{|1-\et{\nu_n}(z)|^2}\] defines a continuous function in the closed disc $\overline{\mathbb{D}}$ for $n \geq N_5$. Consequently, the absolute continuity of the measure $\nu_n$ and the continuity of its density $f_n$ for $n \geq N_5$ now follow at once from the continuity of $p_n$ on $\mathbb{T}$ and the Poisson type integral formula: \[p_n(z)=\frac{1}{2\pi}\int_{\mathbb{T}}\Re{\left[\frac{\xi + z}{\xi - z}\right]}\, d\nu_{n}\left(1/\xi\right), \quad z\in \mathbb{D}.\] Note that $f_n(\xi)=p_n(1/\xi)$ for $\xi \in \mathbb{T}$.

     Finally, the analyticity and the uniform convergence property for $f_n$ on the arc $I_{\varepsilon}$ are direct applications of Theorem 4.3. To finish the proof, we only need to show that the values of $f_n$ can be made uniformly small on the symmetric complement $\mathbb{T} \setminus I_{\varepsilon}$ as $n \rightarrow \infty$. Indeed, by \eqref{eq:4.13}, \eqref{eq:4.14} and \eqref{eq:4.15}, we have the estimate \[\limsup_{n\rightarrow \infty}\max_{\xi \in \mathbb{T} \setminus I_{\varepsilon}}\left|f_n(\xi)\right| \leq \,\frac{8d(\varepsilon)}{(\text{d}[1,\eta_{\lambda_t}\left(\mathbb{T}\right)])^2\pi}.\] By letting $\varepsilon \rightarrow 0$ (and hence $d(\varepsilon)\rightarrow 0$), the uniform convergence of $f_n$ now follows, and the proof is finished.     \end{proof}

   \section{convergence of free entropy and uniform universality}
      For each $\mu \in \mathcal{M}_{\mathbb{T}}$, its \emph{free entropy} $\Sigma(\mu)$ is defined through the logarithmic energy of $\mu$, that is, \[\Sigma(\mu)=\iint\limits_{\mathbb{T} \times \mathbb{T}}\log\left|\xi_1-\xi_2\right|\,d\mu(\xi_1)d\mu(\xi_2).\] This quantity was introduced and studied by Voiculescu in his theory of mutual free information \cite{DVV1999}. On the other hand, it was also proved in \cite{HPlarge} that the functional $\Sigma(\cdot)$ serves as the rate function in a large deviation theorem for the empirical eigenvalue distribution of random unitary matrices. The first application of our results concerns convergence of free entropy in limit theorems, whose proof follows easily from the dominated convergence theorem and the fact that the logarithmic integrand $\log\left|e^{i\theta_1}-e^{i\theta_2}\right|$ is integrable with respect to the product measure $d\theta_1 \times d\theta_2$ on the circle $\mathbb{T}$. \begin{prop}\label{prop:5.1}
      Suppose that $\{\mu_n\}_{n=1}^{\infty}$ is an infinitesimal sequence in $\mathcal{M}_{\mathbb{T}}$ and that the distributional approximation $\nu_n=(\mu_n)^{\boxtimes k_n} \Rightarrow \nu$ holds as $n\rightarrow \infty$. If the $\boxtimes$-infinitely divisible limit law $\nu$ has a positive density everywhere on $\mathbb{T}$, or, $\nu=\lambda_t$ for some $t>0$, then we have \[\lim_{n\rightarrow \infty}\Sigma(\nu_n)=\Sigma(\nu).\]
\end{prop}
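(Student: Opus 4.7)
My plan is to upgrade the weak convergence $\nu_n \Rightarrow \nu$ to uniform convergence of the density functions $f_n = d\nu_n/d\theta$ on the entire circle $\mathbb{T}$, and then apply the dominated convergence theorem to the double integral defining $\Sigma(\nu_n)$.

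First I would establish uniform convergence of the densities in each of the two hypothesized cases. If $\nu$ is absolutely continuous with continuous, strictly positive density $f_\nu$ on $\mathbb{T}$, then by compactness of $\mathbb{T}$ and continuity of $f_\nu$, one can cover $\mathbb{T}$ by finitely many closed arcs $I_1, \ldots, I_m$ together with a common lower bound $\delta > 0$ so that $f_\nu \geq \delta$ on each $I_j$. Applying Theorem \ref{thm:4.3} to each $I_j$ yields an open neighborhood $U_j \supset I_j$ and a threshold $N_j$ such that for $n \geq N_j$, the density $f_n$ is well-defined on $I_j$ and converges to $f_\nu$ uniformly there. Taking $n$ past $\max_j N_j$ gives uniform convergence $f_n \to f_\nu$ on all of $\mathbb{T}$. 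In the alternative case $\nu = \lambda_t$, Theorem \ref{thm:4.8}(3) supplies exactly the required uniform convergence on $\mathbb{T}$ directly.

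With $f_n \to f_\nu$ uniformly on the compact set $\mathbb{T}$ and $f_\nu$ bounded, the sequence $\{f_n\}$ is uniformly bounded by some constant $M > 0$ for all sufficiently large $n$. Substituting $d\nu_n(\xi) = f_n(\xi)\,d\theta$ into the definition of the free entropy gives
\[
\Sigma(\nu_n) = \iint_{\mathbb{T} \times \mathbb{T}} \log|\xi_1 - \xi_2|\, f_n(\xi_1)\, f_n(\xi_2)\, d\theta_1\, d\theta_2.
\]
The integrand converges almost everywhere to $\log|\xi_1 - \xi_2|\, f_\nu(\xi_1)\, f_\nu(\xi_2)$ and is dominated in absolute value by $M^2\,|\log|\xi_1 - \xi_2||$. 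Since $|\log|e^{i\theta_1} - e^{i\theta_2}|| = |\log|2\sin((\theta_1 - \theta_2)/2)||$ has only a local logarithmic singularity along the diagonal, this dominating function is integrable on $\mathbb{T} \times \mathbb{T}$, and the dominated convergence theorem yields $\Sigma(\nu_n) \to \Sigma(\nu)$.

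I do not anticipate any serious obstacle here, as the proposition is essentially a corollary of the superconvergence theorems of Section 4. The only point requiring attention is that in the case $\nu = \lambda_t$ with $t \leq 4$, the density $d\lambda_t/d\theta$ vanishes on a proper sub-arc of $\mathbb{T}$, so Theorem \ref{thm:4.3} alone cannot cover $\mathbb{T}$ with arcs of positivity; this is precisely why the stronger global uniform convergence conclusion of Theorem \ref{thm:4.8} must be invoked in that case.
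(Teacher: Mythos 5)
Your proposal is correct and takes essentially the same approach as the paper: upgrade weak convergence to uniform convergence of densities via the superconvergence theorems (Theorem \ref{thm:4.3} in the strictly positive density case, Theorem \ref{thm:4.8} when $\nu=\lambda_t$), then apply dominated convergence using the integrability of the logarithmic kernel on $\mathbb{T}\times\mathbb{T}$. The only cosmetic difference is that your finite-cover argument is unnecessary since $I=\mathbb{T}$ can be taken directly in Theorem \ref{thm:4.3} once $f_\nu$ is continuous and strictly positive on the compact circle, but this does not affect the validity of the argument.
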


A special case of the preceding result is the convergence of free entropy to that of the Haar measure $d\theta/2\pi$ on $\mathbb{T}$. It is well-known that the Haar measure has maximal free entropy $0$ among all probability laws on $\mathbb{T}$. Meanwhile, by Theorem 4.4, a discrete free convolution process $(\mu)^{\boxtimes n}$ of a non-degenerate law $\mu$ will flow to the Haar measure and the entropy is maximized over time. We conclude this paper with the following result which says that this uniform universality behavior also holds for continuous-time processes. Such a universality result was first proved for the free Brownian motion $U_t$ in Voiculescu's liberation process paper \cite{DVV1999} and later extended to free convolution semigroups in \cite{Zhong1}. Our approach here provides a conceptual proof of this phenomenon.
\begin{prop}\label{prop:5.2}
      Suppose that $\left(\nu_{t}\right)_{t \geq t_{0}}$ is a family of measures supported on $\mathbb{T}$ such that the initial distribution $\nu_{t_0}$ is non-degenerate, \[\nu_{t} \boxtimes \nu_{s}=\nu_{t+s},\quad s,t \geq t_{0},\] and $\nu_t \Rightarrow \nu_{t_0}$ as $t \rightarrow t_{0}^{+}$. Then, as $t \rightarrow \infty$, the flow $\nu_{t}$ converges to the Haar measure in law, in density, and in free entropy. In particular, every non-degenerate free unitary L\'{e}vy process flows to the Haar measure in this way as the time parameter tends to infinity.\end{prop}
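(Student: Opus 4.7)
The plan is to reduce the assertion to Theorem \ref{thm:4.4} by writing $\nu_t$, for large $t$, as a high $\boxtimes$-convolution power whose base remains inside the given semigroup; the free entropy convergence will then follow from dominated convergence against the logarithmic kernel. The trivial cases $\nu_{t_0}=d\theta/2\pi$ and $m(\nu_{t_0})=0$ (which, via $\boxtimes$-infinite divisibility, forces $\nu_t$ to coincide with Haar measure eventually) may be handled separately; we may otherwise assume $\nu_{t_0}\in\MT$ with $0<|m(\nu_{t_0})|<1$, the strict upper bound coming from the fact that the modulus of the first moment of a probability measure on $\mathbb{T}$ equals $1$ only for point masses.

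For $t\geq 2t_0$, set $k_t=\lfloor t/t_0\rfloor$, $s_t=t/k_t\in[t_0,t_0+t_0/k_t)$, and $\mu_t:=\nu_{s_t}$, so that the semigroup identity yields $\nu_t=\mu_t^{\boxtimes k_t}$. As $t\to\infty$, right-continuity of the semigroup at $t_0$ gives $\mu_t\Rightarrow\nu_{t_0}$ and hence $m(\mu_t)\to m(\nu_{t_0})\neq 0$. Multiplicativity of the first moment under $\boxtimes$ gives $m(\mu_t)^{k_t}=m(\nu_t)$, so
\begin{equation*}
|m(\mu_t)|^{k_t-2} \;=\; \frac{|m(\nu_t)|}{|m(\mu_t)|^2},
\end{equation*}
and since the denominator stays bounded away from $0$, verifying the hypothesis of Theorem \ref{thm:4.4} reduces to showing $|m(\nu_t)|\to 0$. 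For this, fix $t\geq 2t_0$, write $t=\kappa t_0+r$ with $\kappa=\lfloor t/t_0\rfloor-1$ and $r\in[t_0,2t_0)$, and decompose $\nu_t=\nu_{t_0}^{\boxtimes\kappa}\boxtimes\nu_r$; taking moduli gives $|m(\nu_t)|\leq|m(\nu_{t_0})|^\kappa\to 0$ since $|m(\nu_{t_0})|<1$.

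Applying Theorem \ref{thm:4.4} along any sequence $t_n\to\infty$ then shows that $\nu_{t_n}$ is absolutely continuous for large $n$, with a density $f_n$ that extends analytically across $\mathbb{T}$ and converges to $1/(2\pi)$ uniformly; since the sequence was arbitrary, $f_t\to 1/(2\pi)$ uniformly on $\mathbb{T}$ as $t\to\infty$, which also delivers convergence in law. For the free entropy, the uniform boundedness of $f_t$ allows one to dominate $\log|\xi_1-\xi_2|\,f_t(\xi_1)f_t(\xi_2)$ by $C|\log|\xi_1-\xi_2||$, which is integrable on $\mathbb{T}\times\mathbb{T}$; dominated convergence then yields $\Sigma(\nu_t)\to\Sigma(d\theta/2\pi)=0$. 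The last sentence of the proposition is immediate by taking any $t_0>0$ in the semigroup produced by a free unitary L\'evy process. The main obstacle is precisely the decomposition step: the constraint $s_t\geq t_0$ caps $k_t$ at $\lfloor t/t_0\rfloor$ and prevents infinitesimalization of the base, so the decay required by Theorem \ref{thm:4.4} must come from the global mean of $\nu_t$ shrinking to zero, which is exactly what non-degeneracy of $\nu_{t_0}$ supplies through the bound $|m(\nu_{t_0})|<1$.
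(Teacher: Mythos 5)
Your proof is correct and follows essentially the same strategy as the paper: after discarding the zero-mean case, you write $\nu_t=\mu_t^{\boxtimes k_t}$ with $k_t=\lfloor t/t_0\rfloor$ and $\mu_t=\nu_{t/k_t}$, use the right-continuity at $t_0$ to get $m(\mu_t)\to m(\nu_{t_0})$, and invoke Theorem~\ref{thm:4.4} along arbitrary sequences $t_n\to\infty$. Two small remarks. First, your verification of the hypothesis $|m(\mu_t)|^{k_t-2}\to 0$ takes an unnecessary detour through the identity $|m(\mu_t)|^{k_t-2}=|m(\nu_t)|/|m(\mu_t)|^2$ and a second decomposition of $\nu_t$; the direct route, which is what the paper uses, is simply that $|m(\mu_t)|\to|m(\nu_{t_0})|<1$ forces $|m(\mu_t)|\le c<1$ eventually, and then $|m(\mu_t)|^{k_t-2}\le c^{k_t-2}\to 0$ because $k_t\to\infty$. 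Your closing paragraph frames the failure of infinitesimalization as a genuine obstacle requiring the workaround, but it is not: boundedness away from $1$ plus $k_t\to\infty$ already suffices. Second, in the zero-mean case you attribute the conclusion $\nu_t=d\theta/2\pi$ for $t\ge 2t_0$ to ``$\boxtimes$-infinite divisibility,'' but it is not immediate from the hypotheses that $\nu_{2t_0}$ is $\boxtimes$-infinitely divisible (the semigroup is only given for $t\ge t_0$, so $\nu_{2t_0}$ is a priori only a square, not an $n$-th power for all $n$). The paper's mechanism is different and more elementary: since $\nu_{t_0}$ has mean zero, freeness (vanishing of alternating centered moments) directly gives $\nu_{t_0}\boxtimes\nu_{t_0}=d\theta/2\pi$, and then $\nu_t=\nu_{t-2t_0}\boxtimes d\theta/2\pi=d\theta/2\pi$ for $t\ge 2t_0$ because Haar is absorbing under $\boxtimes$. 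The rest — uniform convergence of densities along arbitrary sequences and the dominated-convergence argument for free entropy against the integrable kernel $\log|\xi_1-\xi_2|$ — matches the paper.
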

      \begin{proof}Note first that the measure $\nu_{t_0}$ is not a point mass if and only if $\left|\text{m}(\nu_{t_0})\right|<1$. If the first moment $\text{m}(\nu_{t_0})=0$, then the definition of freeness implies $\nu_{2t_0}=\nu_{t_0} \boxtimes \nu_{t_0}=d\theta/2\pi$. Moreover, for any $t>t_0$, one has $\nu_t=\nu_{t-2t_0} \boxtimes d\theta/2\pi=d\theta/2\pi$ from the property of vanishing mixed free cumulants, and hence the flow $\nu_t$ terminates at the Haar measure when $t=2t_0$.

In the case of nonzero first moment, let us assume that $c>\left|\text{m}(\nu_{t_0})\right|>1/c$ for some constant $c \in (0,1)$, and let $\{t_n\}_{n=1}^{\infty}$ be an arbitrary sequence of real numbers such that $t_0 \leq t_n \rightarrow \infty$ as $n\rightarrow \infty$. In this case, we introduce \[\mu_{n}=\nu_{t_n/k_n} \quad \text{and} \quad k_n=[t_n/t_0],\quad n \geq 1,\] where $[x]$ denotes the integer part of $x\in \mathbb{R}$. Since $t_n/k_n \rightarrow t_0^{+}$ as $n \rightarrow \infty$, we conclude that $c>\left|\text{m}(\mu_n)\right|>1/c$ for sufficiently large $n$. This implies \[\lim_{n \rightarrow \infty}\left[\text{m}(\mu_n)\right]^{k_n-2}=0,\] whence the desire results follow from Theorem 4.4 and the fact that $\nu_{t_n}=(\mu_n)^{\boxtimes k_n}$.
       \end{proof}

 \subsection*{Acknowledgement}The third named author would like to thank Professors Hari Bercovici and Vittorino Pata for thoughtful conversations. The first named author was partially supported by NSF grant DMS-1160849, and the second named author was supported by an NSERC Canada Discovery Grant.

\bibliography{clipboard}

\bibliographystyle{amsplain}

\end{document}